\newcommand{\nc}{\newcommand}
\newcommand{\delete}[1]{}
\nc{\mlabel}[1]{\label{#1}}  
\nc{\mcite}[1]{\cite{#1}}  
\nc{\mref}[1]{\ref{#1}}  
\nc{\mbibitem}[1]{\bibitem{#1}} 
\nc{\mlabel}[1]{\label{#1}  
{\hfill \hspace{1cm}{\small\tt{{\ }\hfill(#1)}}}}
\nc{\mcite}[1]{\cite{#1}{\small{\tt{{\ }(#1)}}}}  
\nc{\mref}[1]{\ref{#1}{{\tt{{\ }(#1)}}}}  
\nc{\mbibitem}[1]{\bibitem[\bf #1]{#1}} 
\newtheorem{theorem}{Theorem}[section]
\newtheorem{prop}[theorem]{Proposition}
\theoremstyle{definition}
\newtheorem{defn}[theorem]{Definition}
\newtheorem{remark}[theorem]{Remark}
\newtheorem{exam}[theorem]{Example}
\newtheorem{prop-def}{Proposition-Definition}[section]
\newcommand\cal[1]{\mathcal{#1}}
\newcommand\alphlist{a,b,c,d,e,f,g,h,i,j,k,l,m,n,o,p,q,r,s,t,u,v,w,x,y,z}
\newcommand\Alphlist{A,B,C,D,E,F,G,H,I,J,K,L,M,N,O,P,Q,R,S,T,U,V,W,X,Y,Z}
\newcommand\getcmds[3]{\expandafter\newcommand\csname #2#1\endcsname{#3{#1}}}
\alphlist\do{\expandafter\getcmds\expandafter{\x}{frak}{\mathfrak}}
\Alphlist\do{\expandafter\getcmds\expandafter{\x}{frak}{\mathfrak}}
\nc{\bfk}{{\bf k}}
\font\cyr=wncyr10
\newfont{\scyr}{wncyr10 scaled 550}
\nc{\sha}{\mbox{\cyr X}}
\nc{\ssha}{\mbox{\bf \scyr X}}
\nc{\id}{\mathrm{id}}
\nc{\Id}{\mathrm{Id}}
\nc{\lbar}[1]{\overline{#1}}
\nc{\ot}{\otimes}
\nc{\dep}{\mathrm{dep}}
\nc{\leaf}{\mathrm{leaf}}
\newlength\xch
\newsavebox\dbox
\sbox\dbox{\tikz{\fill (0,0) circle (0.05cm);}}
\newif\ifqdd
\newif\ifzdd
\newcommand\cddf[3]{%
\coordinate (#2) at ($(#1)+(#3)$);
\draw (#1)--(#2);
\ifqdd\node at (#1) {\usebox\dbox};\fi
\ifzdd\node at (#2) {\usebox\dbox};\fi}
\newcommand\cdx[4][1]{\cddf{#2}{#3}{#4:#1*\xch}}
\newcommand\cdb[2][1]{\cdx[#1]{#2}{#2b}{-90}}
\let\treeoo\treeo%
\newcommand\ocdx[6][1]{%
\node[draw,circle,minimum size=2pt,label={#6:$#5$}]
(#3) at ($(#2)+(#4:#1*\xch)$) {};
\draw (#2)--(#3);}
\newcommand\scopeclip[1]{\begin{scope}
\clip(-1.1,-0.5)rectangle(1.1,1);#1\end{scope}}
\newcommand\XX[2][]{%
\tikz[line width=0.15ex,x=0.5cm,y=0.5cm,baseline,inner sep=1.5pt,
every node/.style={font=\scriptsize},#1]{
\scopeclip{\draw (135:1.5)--(0,0)--(45:1.5) (0,-0.5)--(0,0);}#2}}
\newcommand\xx[3]{%
\scopeclip{\draw(#1/10,#2/10)--+(#3*45:2.5);}}
\newcommand\xxl[2]{\xx{#1}{#2}3}
\newcommand\xxr[2]{\xx{#1}{#2}1}
\newcommand\xxlr[2]{\xxl{#1}{#2}\xxr{#1}{#2}}
\newcommand\xxh[6]{
\draw(#1/10,#2/10)+(0.5*#3*45+0.5*#4*45:#6) node[above] {$#5$};}
\newcommand\xxhu[4][0.15]{\xxh{#2}{#3}13{#4}{#1}}
\newcommand\stree[1]{\XX{\xxhu[0.25]00{#1}}}
\nc{\dnx}{\Delta_n A} \nc{\dx}{\Delta A} \nc{\dgp}{{\rm deg_{P}}}
\nc{\dgt}{{\rm deg_{T}}} \nc{\dg}{{\rm deg}} \nc{\ida}{ID($A$)} \nc{\tu}{\tilde{u}} \nc{\tv}{\tilde{v}}
\nc{\nr}{\calr_n} \nc{\nz}{\calz_n} \nc{\fun}{\cala_{n,d}}
 \nc{\fbase}{\calb} \nc{\LF}{\mathrm{RF}} \nc{\FFA}{\mathrm{LF}} \nc{\irr}{\mathrm{Irr}}
 \nc{\result}{\bfk\mathrm{Irr}(S_n)}  \nc{\I}{I_{\mathrm{ID},n}^0}
 \nc{\nrs}{\calr_n^\star} \nc{\ii}{\mathrm{I}} \nc{\iii}{\mathrm{II}}
\nc{\intl}{{\rm int}}\nc{\ws}[1]{{#1}}\nc{\deleted}[1]{\delete{#1}}\nc{\plas}{placements\xspace}
\nc{\bim}[1]{#1}  \nc{\shaop}{\sha_{\Omega}^{+}}  \nc{\shao}{\sha_{\Omega}}
\nc{\bbim}[2]{#1 #2} \nc{\bbbim}[2]{#1,\, #2} \nc{\RBF}{{\rm RBF}}
\nc{\frb}{F_{\RB}} \nc{\shaf}{\ssha_{\tiny{\Omega}}} \nc{\sham}{\diamond_{\tiny{\Omega}}}
\nc{\lf}{\lfloor} \nc{\rf}{\rfloor} \nc{\shan}{\ssha_{\lambda}}
\nc{\rlex}{{\rm {lex}}} \nc{\bb}{\Box} \nc{\ra}{\rightarrow}
\nc{\e}{{\rm {e}}}
\nc{\DDF}{\mathrm{DD}(X,\,\Omega)}\nc{\DTF}{\mathrm{DT}(X,\,\Omega)} \nc{\DT}{\mathrm{DT}'(\Omega,\,V)}
\nc{\bra}{\mathrm{bra}} \nc{\bre}{\mathrm{bre}}
\nc{\dec}{\mathrm{dec}} \nc{\diamondw}{\diamond_{w}}
\nc{\type}{\mathrm{type}}
\nc\caF[1]{\cal{F}_{#1}(X,\,\Omega)}
\nc\calt{\cal{T}(X,\,\Omega)} \nc\caltn{\cal{T}_n(X,\,\Omega)}
\nc\caltbin{\cal{T}_b(X,\,\Omega)}
\nc\calta{\cal{T}_0(X,\,\Omega)}
\nc\caltb{\cal{T}_1(X,\,\Omega)}
\nc\caltc{\cal{T}_2(X,\,\Omega)}
\nc\caltd{\cal{T}_3(X,\,\Omega)}
\nc\caltm{\cal{T}_m(X,\,\Omega)}
\nc\calf{\cal{F}(X,\,\Omega)}
\nc\fram{\frak{M}(\Omega,\, X)}
\nc\shaw{\sha^{NC}_w(\Omega,\, X)}
\nc\dw{\diamond_w} \nc\dl{\diamond_\ell}
\nc\shal{\sha^{NC}_\ell(X,\, \Omega)} \nc\shav{\sha^{NC}_w(\Omega,\, V)} \nc\shat{\sha^{NC,1}_w(\Omega,\, T^{+}(V))}
\nc{\cfo}{\cal{F}(X,\,\Omega)}
\nc{\frat}{\mathfrak{T}}
\nc{\shh}{\mathrm{Sh}^+_{\Omega}}
\nc{\sh}{\mathrm{Sh}_{\Omega}}
\nc{\lar}{\varinjlim}
\nc\XO{(X,\,\Omega)}
\def\cxo#1#2;{\cal{#1}#2\XO}
\def\cxob#1#2;{\cal{#1}#2_b\XO}
\nc\lrf[2]{B_{#2}^+(#1)}
\nc{\fd}{\mathrm{\text{typed angularly decorated planar rooted trees}}}
\nc{\rb}{\mathrm{RBFWs}} \nc{\dfw}{\mathrm{DFW{(X)}}} \nc{\tfw}{\mathrm{TFW{(X)}}}
\nc{\tfv}{\mathrm{TFW{(V)}}} \nc{\rbf}{\mathrm{RBF}}
\def\Ve#1,#2,#3;{\vee_{#1,\,(#2,\,#3)}}
\def\bigv#1;#2;#3;{\bigvee\nolimits_{#1}^{#2;\,#3}}
\renewcommand{\geq}{\geqslant}
\renewcommand{\leq}{\leqslant}
\begin{document}

\title[Reduced typed  trees and generalized tridendriform algebras]{Reduced typed angularly decorated planar rooted trees and generalized tridendriform algebras}

\author{Lo\"\i c Foissy}
\address{Univ. Littoral C\^ote d'Opale, UR 2597 LMPA, Laboratoire de Math\'ematiques Pures et Appliqu\'ees Joseph Liouville F-62100 Calais, France}
\email{loic.foissy@univ-littoral.fr}

\author{Xiao-Song Peng}
\address{School of Mathematics and Statistics, Lanzhou University, Lanzhou, Gansu 730000, China}
\email{pengxiaosong3@163.com}

\date{\today}

\begin{abstract}
We introduce a generalization of tridendriform algebras, where each of the three products are replaced by a family
of products indexed by a set $\Omega$. We study the needed structure on $\Omega$ for free $\Omega$-tridendriform
algebras to be built on Schr\"oder trees (as it is the case in the classical case), with convenient decorations on their leaves.
We obtain in this way extended triassociative semigroups. We describe commutative $\Omega$-tridendriform algebras
in terms of typed words. We also study links with generalizations of Rota-Baxter algebras
and describe the Koszul duals of the corresponding operads.
\end{abstract}

\subjclass[2010]{17B38, 05C05, 16S10, 08B20
}

\keywords{Generalized tridendriform algebras; Semigroups; Schr\"oder trees; Typed words}

\maketitle

\tableofcontents

\setcounter{section}{0}

\allowdisplaybreaks
\section{Introduction}

Shuffle algebras have been studied for a long time, starting from combinatorial problems of card shufflings,
to operadic aspects. They were formalized in the fifties by Eilenberger and MacLane \cite{EMcL} and, independently,
by Sch\"utzenberger \cite{Sch}: a shuffle algebra is a vector space with a bilinear product $\prec$ following the axiom
\[(x\prec y)\prec y=x\prec (x\prec y+y\prec x).\]
Free shuffle algebras are based on words, with the half-shuffle product: for example,
if $a,b,c,d$ are letters,
\begin{align*}
a\prec bcd&=abcd,\\
ab\prec cd&=abcd+acbd+acdb,\\
abc\prec d&=abcd+abdc+adbc.
\end{align*}
It follows that the product $*$ defined by $x*y=x\prec  y+y\prec x$ is commutative and associative.
The noncommutative version, known as noncommutative shuffle algebras or dendriform algebras,
is introduced by Loday and Ronco \cite{LR}: a dendriform algebra is a vector space with two products $\prec$ and $\succ$,
with the following axioms:
\begin{align*}
(x\prec y)\prec z&=x\prec (y\prec z+y\succ z),\\
(x\succ y)\prec z&=x\succ (y\prec z),\\
x\succ (y\succ z)&=(x\prec y+x\succ y)\succ z.
\end{align*}
It follows that the product $*$ defined by $x*y=x\prec y+x\succ y$ is associative.
Loday and Ronco described the free dendriform algebra on one generator in terms of planar binary trees
\cite{LR2,Aguiar2}.
Recently, various generalizations of dendriform algebras appeared: the two products $\prec$ and  $\succ$
are replaced by families $(\prec_\alpha)_{\alpha \in \Omega}$ and $(\succ_\alpha)_{\alpha \in \Omega}$
of products parametrized by elements of a given set $\Omega$, which can have extra structures:
for matching dendriform algebras \cite{ZGG20}, $\Omega$ is just a set, whereras for family dendriform algebra it is a semigroup.
More generally, $\Omega$-dendriform algebras over an extended diassociative semigroup have been introduced and studied in
\cite{Foi20}. \\

In the spirit of \cite{Foi20}, we work here with parametrized versions of tridendriform algebras.
Tridendriform algebras are introduced in \cite{LRtridend}: they are vector spaces with three products $\prec$, $\succ$
and $\circ$, with the following axioms:
\begin{align*}
(a \prec b) \prec c&=\ a \prec (b \succ c)+ a \prec(b \prec c)+ a \prec (b \circ c) \\
(a \succ b) \prec c&=\ a \succ (b \prec c) \\
a \succ (b \succ c)&=\ (a \succ b) \succ c+(a \prec b) \succ c+ (a \circ b) \succ c \\
(a \succ b) \circ c&=\ a \succ (b \circ c) \\
(a \prec b) \circ c&=\ a\circ (b \succ c) \\
(a \circ b) \prec c&=\ a \circ (b \prec c)\\
(a \circ b) \circ c&=\ a \circ (b \circ c).
\end{align*}
Summing, we obtain that the product $*=\prec+\succ+\circ$ is associative; moreover, $\prec+\circ$ and $\succ$
define a dendriform algebra structure, as well as $\prec$ and $\succ+\circ$. A classical example of tridendriform algebra is given
by quasi-shuffle (or stuffle) algebras \cite{Hoffman,FoissyPatras}: if $(V,\cdot)$ is an associative algebra,
then the half-shuffle algebra $T(V)$ is tridendriform. For example, if $a,b,c,d\in V$,
\begin{align*}
ab \prec cd&=abcd+acbd+acdb+a(b\cdot c)d+ac(b\cdot d),\\
ab\succ cd&=cabd+cadb+cdab+c(a\cdot d)b+ca(b\cdot d),\\
ab\succ cd&=(a\cdot c)bd+(a\cdot c)db+(a\cdot c)(b\cdot d).
\end{align*}
Other examples of tridendriform algebras are based on packed words or on parking functions \cite{NovelliThibon}.
The free tridendriform algebra on one generator is described by Loday and Ronco in terms of planar reduced trees,
which we call here Schr\"oder trees.\\

We here give a parametrized version of tridendriform algebras. We start with a set $\Omega$
with six products $\leftarrow$, $\rightarrow$, $\lhd$, $\rhd$, $\ast$ and $\cdot$. An $\Omega$-tridendriform algebra
is given three families $(\prec_\alpha)_{\alpha \in \Omega}$,
$(\succ_\alpha)_{\alpha \in \Omega}$ and $(\circ_\alpha)_{\alpha \in \Omega}$ of bilinear products, with the following axioms:
\begin{align*}
(a \prec_{\alpha} b) \prec_{\beta} c&=\ a \prec_{\alpha \rightarrow \beta} (b \succ_{\alpha \rhd \beta} c)+ a \prec_{\alpha \leftarrow \beta} (b \prec_{\alpha \lhd \beta} c)+ a \prec_{\alpha \cdot \beta} (b \circ_{\alpha\ast \beta} c) \\
(a \succ_{\alpha} b) \prec_{\beta} c&=\ a \succ_{\alpha} (b \prec_{\beta} c) \\
a \succ_{\alpha} (b \succ_{\beta} c)&=\ (a \succ_{\alpha \rhd \beta} b) \succ_{\alpha \rightarrow \beta} c+(a \prec_{\alpha \lhd \beta} b) \succ_{\alpha \leftarrow \beta} c+ (a \circ_{\alpha\ast \beta} b) \succ_{\alpha \cdot \beta} c \\
(a \succ_{\alpha} b) \circ_{\beta} c&=\ a \succ_{\alpha} (b \circ_{\beta} c) \\
(a \prec_{\alpha} b) \circ_{\beta} c&=\ a\circ_{\beta} (b \succ_{\alpha} c) \\
(a \circ_{\alpha} b) \prec_{\beta} c&=\ a \circ_{\alpha} (b \prec_{\beta} c)\\
(a \circ_{\alpha} b) \circ_{\beta} c&=\ a \circ_{\alpha} (b \circ_{\beta} c).
\end{align*}
As in the classical case, particular examples of $\Omega$-tridendriform algebras
are given by $\Omega$-Rota-Baxter algebras, as defined in \cite{FoissyPeng}, see Proposition \ref{propRB}.
As a condition, we impose that free $\Omega$-tridendriform algebras are based on Schr\"oder trees
decorated in some sense by elements of $\Omega$.
In the dendriform case, the elements of $\Omega$ become types (that is to say decorations) of the internal edges
of the planar binary trees which were the basis of free dendriform algebras: this is not possible for Schr\"oder trees,
as the number of internal edges does not uniquely depend on the number of internal vertices,
which is a major difference with planar binary trees. Instead, we choose that the elements of $\Omega$
become decorations of the leaves of the Schr\"oder trees, at the exception of the leftmost and rightmost ones.
We then define inductively three families of products $\prec$, $\succ$ and $\circ$ on these trees
and ask for them to define an $\Omega$-tridendriform algebra. This impose a strong constraint on the products
taken on $\Omega$: they have to make $\Omega$ an extended triassociative semigroup (briefly, ETS):
see Definition \ref{defiSEDS} below for the list of 18 axioms defining these object below.
If this holds, these Schr\"oder trees indeed give free $\Omega$-tridendriform algebras (Theorem \ref{theoprincipal}).

Moreover, if $\Omega$ is a ETS and if $A$ is an $\Omega$-tridendriform algebra, then
$\bfk\Omega\otimes A$ inherits a structure of (classical) tridendriform algebra (Proposition \ref{prop3.6}),
which generalizes a similar result for $\Omega$-dendriform algebra proved in \cite{Foi20}.
We consider the particular case where $A$ is the $\Omega$-tridendriform algebra of Schr\"oder trees
in Proposition \ref{prop3.7}, where we give necessary and sufficient condition for $\bfk \Omega \ot \bfk \frat(X,\Omega)$
to be free.

We also study $\Omega$-tridendriform algebras of typed words. If $A$ is a matching associative algebra
(Definition \ref{defmatch}), we prove in Theorem \ref{theocomtridend} that the algebra $\sh^+(A)$
of $\Omega$-typed words on $A$ is an  $\Omega$-tridendriform algebra.
When $A$ and $\Omega$ are commutative, then we prove in Theorem \ref{theouniv} that
$\sh^+(A)$ is the free commutative $\Omega$-tridendriform over $A$. Consequently, if $A$
is the free matching algebra, generated by a set $X$,  then $\sh^+(A)$ is the free commutative $\Omega$-tridendriform
algebra generated by $X$.

The last section of the paper is devoted to the operad of $\Omega$-tridendriform algebras. In particular,
we study operadic morphisms from tridendriform algebras to $\Omega$-tridendriform algebras and
we compute its Koszul dual in Proposition \ref{propKoszul}, finding in this way a parametrised version
of triassociative algebras \cite{LRtridend}. \\

{\bf Notation.} Throughout this paper, let {\bfk} be a unitary commutative ring which will be the base
ring of all modules, algebras, as well as linear maps, unless otherwise specified.

\section{Generalized tridendriform algebras}

\subsection{EDS and ETS}
First, we recall the definition of diassociative semigroups and extended diassociative semigroups in~\cite{Foi20}.

\begin{defn}\cite[Definition~1]{Foi20}
A {\bf diassociative semigroup} is a family $(\Omega, \leftarrow, \rightarrow)$, where $\Omega$ is a set and $\leftarrow, \rightarrow: \Omega \times \Omega \rightarrow \Omega$ are maps such that
\begin{align*}
(\alpha \leftarrow \beta) \leftarrow \gamma&=\ \alpha \leftarrow (\beta \leftarrow \gamma)= \alpha \leftarrow (\beta \rightarrow \gamma),\\
(\alpha \rightarrow \beta) \leftarrow \gamma&=\ \alpha \rightarrow (\beta \leftarrow \gamma),\\
(\alpha \rightarrow \beta) \rightarrow \gamma&=\ (\alpha \leftarrow \beta) \rightarrow \gamma=\alpha \rightarrow (\beta \rightarrow \gamma),
\end{align*}
for all $\alpha, \beta, \gamma \in \Omega.$
\end{defn}

\begin{defn}\cite[Definition~2]{Foi20}
An {\bf extended diassociative semigroup} (abbr. EDS) is a family $(\Omega, \leftarrow, \rightarrow, \lhd,\rhd)$, where $\Omega$ is a set and $\leftarrow,\rightarrow, \lhd,\rhd: \Omega \times \Omega \rightarrow \Omega$ such that $(\Omega, \leftarrow, \rightarrow)$ is a diassociative semigroup and
\begin{align}
\label{EQ1}\alpha \rhd (\beta \leftarrow \gamma)&=\ \alpha \rhd \beta,\\
(\alpha \rightarrow \beta) \lhd \gamma&=\ \beta \lhd \gamma,\\
(\alpha \lhd \beta) \leftarrow ((\alpha \leftarrow \beta) \lhd \gamma)&=\ \alpha \lhd (\beta \leftarrow \gamma),\\
(\alpha \lhd \beta) \lhd ((\alpha \leftarrow \beta) \lhd \gamma)&=\ \beta \lhd \gamma,\\
(\alpha \lhd \beta) \rightarrow ((\alpha \leftarrow \beta) \lhd \gamma)&=\ \alpha \lhd (\beta \rightarrow \gamma),\\
(\alpha \lhd \beta) \rhd ((\alpha \leftarrow \beta) \lhd \gamma)&=\ \beta \rhd \gamma,\\
(\alpha \rhd (\beta \rightarrow \gamma)) \leftarrow (\beta \rhd \gamma)&=\ (\alpha \leftarrow \beta) \rhd \gamma,\\
(\alpha \rhd (\beta \rightarrow \gamma)) \lhd (\beta \rhd \gamma)&=\ \alpha \lhd \beta,\\
(\alpha \rhd (\beta \rightarrow \gamma)) \rightarrow (\beta \rhd \gamma)&=\ (\alpha \rightarrow \beta) \rhd \gamma,\\
\label{EQ10} (\alpha \rhd (\beta \rightarrow \gamma)) \rhd (\beta \rhd \gamma)&=\ \alpha \rhd \beta,
\end{align}
for all $\alpha,\beta, \gamma \in \Omega$.
\end{defn}

\begin{defn}\label{defiSEDS}
An {\bf extended triasssociative semigroup}  (abbr. ETS) is a family $(\Omega, \leftarrow, \rightarrow, \lhd,\rhd, \cdot, \ast)$, where $(\Omega, \leftarrow, \rightarrow, \lhd, \rhd)$ is an EDS and
\begin{align}
\label{EQ17}(\alpha \rightarrow \beta) \ast \gamma&=\ \beta \ast \gamma,\\
{\label{eq17}}(\alpha \rightarrow \beta) \cdot \gamma&=\ \alpha \rightarrow (\beta \cdot \gamma),\\
{\label{eq18}}\alpha \rhd \beta&=\ \alpha \rhd (\beta \cdot \gamma),\\
\label{EQ20}(\alpha \lhd \beta) \ast ((\alpha \leftarrow \beta) \lhd \gamma)&=\ \beta \ast \gamma,\\
{\label{eq19}}(\alpha \lhd \beta) \cdot ((\alpha \leftarrow \beta) \lhd \gamma)&=\ \alpha \lhd (\beta \cdot \gamma),\\
{\label{eq20}}(\alpha \leftarrow \beta) \leftarrow \gamma&=\ \alpha \leftarrow (\beta \cdot \gamma),\\
\label{EQ23}(\alpha \rhd (\beta \rightarrow \gamma))\ast (\beta \rhd \gamma)&=\ \alpha \ast \beta,\\
{\label{eq21}}\alpha \rightarrow (\beta \rightarrow \gamma)&=\ (\alpha \cdot \beta) \rightarrow \gamma,\\
{\label{eq22}}(\alpha \rhd (\beta \rightarrow \gamma)) \cdot (\beta \rhd \gamma)&=\ (\alpha \cdot \beta) \rhd \gamma,\\
\label{EQ26}(\alpha \leftarrow \beta)\ast \gamma&=\ \alpha\ast (\beta \rightarrow \gamma),\\
{\label{eq23}}(\alpha \leftarrow \beta) \cdot \gamma&=\ \alpha \cdot (\beta \rightarrow \gamma),\\
{\label{eq24}}\alpha \lhd \beta&=\ \beta \rhd \gamma,\\
\label{EQ29}\alpha\ast \beta&=\ \alpha\ast (\beta \leftarrow \gamma),\\
{\label{eq25}}(\alpha \cdot \beta) \lhd \gamma&=\ \beta \lhd \gamma,\\
{\label{eq26}}(\alpha \cdot \beta) \leftarrow \gamma&=\ \alpha \cdot (\beta \leftarrow \gamma),\\
\label{EQ32}\alpha \ast \beta&=\ \alpha \ast (\beta \cdot \gamma),\\
\label{EQ33}(\alpha \cdot \beta) \ast \gamma&=\ \beta \ast \gamma,\\
{\label{eq27}}(\alpha \cdot \beta) \cdot \gamma&=\ \alpha \cdot (\beta \cdot \gamma).
\end{align}
\end{defn}

\begin{exam}\label{ex2.4}
\begin{enumerate}
\item Let $(\Omega,\ast,\cdot)$ be a set with two products satisfying (\ref{EQ32})--(\ref{eq27}).
This holds for example if for any $\alpha,\beta \in \Omega$,
\begin{align*}
\alpha*\beta&=\alpha,&\alpha\cdot \beta&=\beta,
\end{align*}
or if for any $\alpha,\beta \in \Omega$,
\begin{align*}
\alpha*\beta&=\beta,&\alpha\cdot \beta&=\alpha.
\end{align*}
We put, for any $\alpha,\beta \in \Omega$:
\begin{align*}
\alpha \leftarrow \beta&=\alpha,&\alpha \lhd \beta&=\beta,\\
\alpha \rightarrow \beta&=\alpha,&\alpha \rhd \beta&=\alpha.
\end{align*}
Then $(\Omega, \leftarrow, \rightarrow, \lhd,\rhd, \cdot, \ast)$ is an ETS.
\item Let $(\Omega, \leftarrow, \rightarrow, \lhd,\rhd, \cdot, \ast)$ be an ETS.
For any $\alpha,\beta \in \Omega$, we put
\begin{align*}
\alpha \leftarrow^{op}\beta&=\beta \rightarrow \alpha,&
\alpha \lhd^{op}\alpha&=\beta \rhd \alpha,\\
\alpha \rightarrow^{op}\beta&=\beta \leftarrow \alpha,&
\alpha \rhd^{op}\alpha&=\beta \lhd \alpha,\\
\alpha \ast^{op}\beta&=\beta\ast\alpha,&
\alpha \cdot^{op}\beta&=\beta \cdot \alpha.
\end{align*}
Then $(\Omega,\leftarrow^{op},\rightarrow^{op},\lhd^{op},\rhd^{op},\ast^{op},\cdot^{op})$
 is also an ETS, called the opposite of $\Omega$. We shall say that $\Omega$ is commutative if it is equal to its opposite.
\end{enumerate}
\end{exam}

\subsection{$\Omega$-tridendriform algebras}

Let us now give the concept of $\Omega$-tridendriform algebras as follows.

\begin{defn}
Let $\Omega$ be a set with six products $\leftarrow, \rightarrow, \lhd, \rhd, \cdot, \ast$. An $\Omega$-{\bf tridendriform algebra} is a family $(A, (\prec_{\omega})_{\omega\in \Omega}, (\succ_{\omega})_{\omega \in \Omega}, (\circ_{\omega})_{\omega \in \Omega})$, where $A$ is a \bfk-module and $\prec_{\omega}, \succ_{\omega}, \circ_{\omega}: A \ot A \rightarrow A$ are linear maps such that
\begin{align}
(a \prec_{\alpha} b) \prec_{\beta} c&=\ a \prec_{\alpha \rightarrow \beta} (b \succ_{\alpha \rhd \beta} c)+ a \prec_{\alpha \leftarrow \beta} (b \prec_{\alpha \lhd \beta} c)+ a \prec_{\alpha \cdot \beta} (b \circ_{\alpha\ast \beta} c) \label{eq:tri1}\\
(a \succ_{\alpha} b) \prec_{\beta} c&=\ a \succ_{\alpha} (b \prec_{\beta} c) \label{eq:tri2}\\
a \succ_{\alpha} (b \succ_{\beta} c)&=\ (a \succ_{\alpha \rhd \beta} b) \succ_{\alpha \rightarrow \beta} c+(a \prec_{\alpha \lhd \beta} b) \succ_{\alpha \leftarrow \beta} c+ (a \circ_{\alpha\ast \beta} b) \succ_{\alpha \cdot \beta} c \label{eq:tri3}\\
(a \succ_{\alpha} b) \circ_{\beta} c&=\ a \succ_{\alpha} (b \circ_{\beta} c) \label{eq:tri4}\\
(a \prec_{\alpha} b) \circ_{\beta} c&=\ a\circ_{\beta} (b \succ_{\alpha} c) \label{eq:tri5}\\
(a \circ_{\alpha} b) \prec_{\beta} c&=\ a \circ_{\alpha} (b \prec_{\beta} c)\label{eq:tri6}\\
(a \circ_{\alpha} b) \circ_{\beta} c&=\ a \circ_{\alpha} (b \circ_{\beta} c) \label{eq:tri7}
\end{align}
for all $a,b,c \in A$ and $\alpha, \beta \in \Omega$. If moreover, for all $a,b\in A$ and $\alpha\in \Omega$,
\begin{align*}
a \prec_{\alpha} b&=b \succ_{\alpha} a && \text{ and }& a \circ_{\alpha}b&=b \circ_{\alpha} a,
\end{align*}
then $A$ is called a {\bf commutative $\Omega$-tridendriform algebra}.
\end{defn}

\begin{exam}
\begin{enumerate}
\item If $\Omega$ is a semigroup, we take all  maps $\circ_{\omega}$ with $\omega \in \Omega$ to be equal to
an associative product $\star$  and
\begin{align*}
\alpha \rightarrow \beta&=\alpha \leftarrow \beta=\alpha \cdot \beta=\alpha\star \beta,&
\alpha \rhd \beta&=\alpha,\,&\alpha \lhd \beta&=\beta,
\end{align*}
then $\Omega$-tridendriform algebra are tridendriform family algebras~\cite{ZG19}.

\item For a set $\Omega$, define, as in Example \ref{ex2.4},
\begin{align*}
\alpha \rightarrow \beta&=\alpha \lhd \beta=\alpha\ast \beta=\beta,&
\alpha \rhd \beta&=\alpha \leftarrow \beta=\alpha \cdot \beta=\alpha.
\end{align*}
Then $\Omega$-tridendriform algebra are matching tridendriform algebras~\cite{ZGG20}.
\end{enumerate}
\end{exam}

\begin{remark}
$\Omega$-dendriform algebras, as defined in \cite{Foi20}, are $\Omega$-tridendriform algebras such that,
for any $\alpha\in \Omega$, $\circ_\alpha=0$.
\end{remark}

Next we show the relationship between $\Omega$-Rota-Baxter algebras \cite{FoissyPeng}
 and $\Omega$-tridendriform algebras.
\begin{prop} \label{propRB}
Let $\Omega$ be a set with six products $\leftarrow, \rightarrow, \lhd, \rhd, \cdot, \ast$ and, for any $\alpha\in \Omega$,
let $\mu_\alpha \in \Omega$. We put, for any $\alpha,\beta \in \Omega$, $\lambda_{\alpha,\beta}=\mu_{\alpha\ast \beta}$.
Let $(A, (P_{\omega})_{\omega \in \Omega})$ be an $\Omega$-Rota-Baxter algebra of weight $\lambda$.
Then $(A, (\prec_{\omega})_{\omega \in \Omega}, (\succ_{\omega})_{\omega \in \Omega},
(\circ_{\omega})_{\omega \in \Omega})$ is an $\Omega$-tridendriform algebra, where
\begin{align*}
a \prec_{\omega} b&:=aP_{\omega}(b), & a \succ_{\omega} b&:=P_{\omega}(a)b, &a \circ_{\omega} b&:=\mu_{\omega} ab
\end{align*}
for all $a, b \in A$ and $\omega \in \Omega$.
\end{prop}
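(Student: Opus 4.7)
The plan is to verify each of the seven axioms \eqref{eq:tri1}--\eqref{eq:tri7} by direct substitution of the definitions of $\prec_\omega$, $\succ_\omega$ and $\circ_\omega$, using only associativity of the underlying product on $A$ and the defining identity of an $\Omega$-Rota-Baxter algebra of weight $\lambda$ from \cite{FoissyPeng}, which reads schematically
\begin{align*}
P_{\alpha}(a)\,P_{\beta}(b) &= P_{\alpha \rightarrow \beta}\!\bigl(P_{\alpha \rhd \beta}(a)\,b\bigr) + P_{\alpha \leftarrow \beta}\!\bigl(a\,P_{\alpha \lhd \beta}(b)\bigr) + \lambda_{\alpha,\beta}\,P_{\alpha \cdot \beta}(ab),
\end{align*}
together with the convention $\lambda_{\alpha,\beta} = \mu_{\alpha\ast\beta}$ fixed in the statement. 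The six products $\leftarrow,\rightarrow,\lhd,\rhd,\cdot,\ast$ on $\Omega$ have been chosen exactly so that the decoration patterns on the two sides of the tridendriform axioms match those on the two sides of the $\Omega$-Rota-Baxter relation.

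The five axioms \eqref{eq:tri2}, \eqref{eq:tri4}, \eqref{eq:tri5}, \eqref{eq:tri6}, \eqref{eq:tri7} do not use the $\Omega$-Rota-Baxter relation at all: expanding both sides produces words in $A$ of the form $P_\alpha(a)\,b\,P_\beta(c)$, $\mu_\beta P_\alpha(a)bc$, $\mu_\beta a P_\alpha(b) c$, $\mu_\alpha ab P_\beta(c)$ and $\mu_\alpha \mu_\beta abc$ respectively, and the equalities are immediate from associativity in $A$ (and from the fact that the scalars $\mu_\omega$ commute with the elements of $A$). Note that on both sides of each of these relations, the same index appears on the three families of products, so no compatibility axiom from the ETS structure is involved.

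The heart of the proof is the two remaining axioms \eqref{eq:tri1} and \eqref{eq:tri3}. For \eqref{eq:tri1}, one writes $(a \prec_\alpha b) \prec_\beta c = a\,P_\alpha(b)\,P_\beta(c)$, applies the $\Omega$-Rota-Baxter relation to $P_\alpha(b)P_\beta(c)$, and recognises the three resulting summands as $a\prec_{\alpha\rightarrow\beta}(b\succ_{\alpha\rhd\beta}c)$, $a\prec_{\alpha\leftarrow\beta}(b\prec_{\alpha\lhd\beta}c)$ and $a\prec_{\alpha\cdot\beta}(b\circ_{\alpha\ast\beta}c)$ respectively, after unfolding the definitions and using $\mu_{\alpha\ast\beta} = \lambda_{\alpha,\beta}$. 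The verification of \eqref{eq:tri3} is entirely symmetric: starting from $a\succ_\alpha(b\succ_\beta c) = P_\alpha(a)P_\beta(b)\,c$, one applies the same $\Omega$-Rota-Baxter relation to $P_\alpha(a)P_\beta(b)$ and reads off the three terms on the right-hand side. There is no genuine obstacle here beyond careful index bookkeeping, since the hypothesis $\lambda_{\alpha,\beta} = \mu_{\alpha\ast\beta}$ and the precise choice of indices $\alpha\rightarrow\beta$, $\alpha\leftarrow\beta$, $\alpha\cdot\beta$, $\alpha\rhd\beta$, $\alpha\lhd\beta$, $\alpha\ast\beta$ in the two axioms are designed to reproduce exactly the structure of the $\Omega$-Rota-Baxter identity.
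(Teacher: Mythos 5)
Your proposal is correct and follows essentially the same route as the paper's proof: a direct verification of all seven axioms, where \eqref{eq:tri2} and \eqref{eq:tri4}--\eqref{eq:tri7} reduce to associativity of $A$ (with the scalars $\mu_\omega$ pulled out), and \eqref{eq:tri1} and \eqref{eq:tri3} follow from applying the $\Omega$-Rota-Baxter identity to $P_\alpha(b)P_\beta(c)$ and $P_\alpha(a)P_\beta(b)$ respectively, using $\lambda_{\alpha,\beta}=\mu_{\alpha\ast\beta}$. The only difference is presentational: the paper writes out every computation explicitly, while you describe them, but the underlying argument is identical.
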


\begin{proof}
For $a,b,c \in A$ and $\alpha, \beta \in \Omega$,
\begin{align*}
(a \prec_{\alpha} b) \prec_{\beta} c&=\ (a P_{\alpha}(b) )P_{\beta}(c)=a(P_{\alpha}(b)P_{\beta}(c))\\
&=\ aP_{\alpha \rightarrow \beta}(P_{\alpha \rhd \beta}(b)c )+a P_{\alpha \leftarrow \beta} (b P_{\alpha \lhd \beta}(c) )+a P_{\alpha \cdot \beta} (\lambda_{\alpha, \beta} bc )\\
&=\ a \prec_{\alpha \rightarrow \beta} (b \succ_{\alpha \rhd \beta} c)+ a\prec_{\alpha \leftarrow \beta}(b \prec_{\alpha \lhd \beta} c)+ a \prec_{\alpha \cdot \beta} (b \circ_{\alpha\ast \beta} c)\\
(a \succ_{\alpha} b) \prec_{\beta} c&=\ (P_{\alpha}(a) b)P_{\beta}(c)=P_{\alpha}(a)(b P_{\beta}(c))=a \succ_{\alpha}(b \prec_{\beta} c)\\
a \succ_{\alpha}(b \succ_{\beta} c)&=\ P_{\alpha}(a) (P_{\beta}(b)c )=(P_{\alpha}(a)P_{\beta}(b) )c\\
&=\ P_{\alpha \rightarrow \beta}(P_{\alpha \rhd \beta}(a)b )c+P_{\alpha \leftarrow \beta}( a P_{\alpha \lhd\beta}(b))c+P_{\alpha \cdot \beta}(\lambda_{\alpha, \beta} ab)c\\
&=\ (a \succ_{\alpha \rhd \beta} b) \succ_{\alpha \rightarrow \beta} c+ (a \prec_{\alpha \lhd \beta} b) \succ_{\alpha \leftarrow \beta}c+(a \circ_{\alpha\ast \beta} b)\succ_{\alpha \cdot \beta} c\\
(a \succ_{\alpha} b) \circ_{\beta}c&=\ \mu_{\beta} (P_{\alpha}(a)b )c=P_{\alpha}(a) (\mu_{\beta} bc )=a \succ_{\alpha}(b \circ_{\beta} c)\\
(a \prec_{\alpha} b) \circ_{\beta} c&=\ \mu_{\beta}(a P_{\alpha}(b) )c=\mu_{\beta}a (P_{\alpha}(b)c )=a \circ_{\beta} (b \succ_{\alpha}c)\\
(a \circ_{\alpha} b) \prec_{\beta}c&=\mu_{\alpha} (ab) P_{\beta}(c)=\mu_{\alpha} a(bP_{\beta}(c))= a\circ_{\alpha}(b \prec_{\beta} c)\\
(a \circ_{\alpha} b) \circ_{\beta}c&=\ \mu_{\beta} \mu_{\alpha} (a b) c=\mu_{\alpha} \mu_{\beta}a(bc)=a \circ_{\alpha} (b \circ_{\beta} c). \qedhere
\end{align*}
\end{proof}

\section{Free $\Omega$-tridendriform algebras}

\subsection{$\Omega$-tridendriform algebras on leaf-typed angularly decorated Schr\"oder trees}
Recall from~\cite{ZGM} that  Schr\"oder trees are planar rooted trees such that there are at least two incoming edges for each vertex. For a Schr\"oder tree $T$, we still view the root and the leaves of $T$ as edges rather than vertices. Denote by $L(T)$ the set of leaf edges of $T$, i.e. edges which represent the leaves of $T$ and denote by $IL(T)$ the subset of $L(T)$ consisting of leaf edges which are neither the leftmost one nor the rightmost one.

\begin{defn}
Let $X$ and $\Omega$ be two sets. An $X$-{\bf angularly decorated $\Omega$-leaf typed} (abbr. {\bf leaf-typed angularly decorated}) {\bf Schr\"oder tree} is a triple $T = (T, \dec, \type)$, where $T$ is a Schr\"oder tree,  $\dec : A(T) \rightarrow X$ and $\type : IL(T) \rightarrow  \Omega$ are maps.
\end{defn}

For $n \geq 1$, let $\frat_n(X,\Omega)$ be the set of $X$-angularly decorated $\Omega$-leaf typed Schr\"oder trees with $n+1$ leaves and at lease one internal vertex. Denote by
\begin{align*}
\frat(X,\Omega)&:=\bigsqcup_{n\geq 1}\frat_n(X,\Omega)&&\text{ and }&\bfk\frat(X,\Omega) &
:=\bigoplus_{n\geq 1}\bfk \frat_n(X,\Omega).
\end{align*}

Here are some examples.
\begin{align*}
\frat_1(X,\Omega)&= \ \left\{\stree x\Bigm|x\in X\right\},\\
 \frat_2(X,\Omega)&=\left\{
\XX{\xxr{-5}5
\node at (0.2,1.2) {\tiny $\alpha$};
\xxhu00x \xxhu{-5}5y
},
\XX{\xxl55
\node at (-0.2,1.2) {\tiny $\alpha$};
\xxhu00x \xxhu55y
},
\XX{\xx002
\xxh0023{x\ }{0.5} \xxh0012{\ \,y}{0.4}
\node at (0,1.2) {\tiny $\alpha$};
}\Bigm|x,y\in X,\alpha\in\Omega
\right\},\\
\frat_3(X,\Omega)&= \ \left\{
\XX[scale=1.6]{\xxr{-4}4\xxr{-7.5}{7.5}
\node at (0.3,1.2) {\tiny $\alpha$};
\node at (-0.3,1.2) {\tiny $\beta$};
\xxhu00x \xxhu[0.1]{-4}4{\,y} \xxhu[0.1]{-7.5}{7.5}{z}
},
\XX[scale=1.6]{\xxr{-5}5\xxl{-2}8
\node at (-0.5,1.2) {\tiny $\alpha$};
\node at (0.2,1.2) {\tiny $\beta$};
\xxhu00x
\xxhu[0.1]{-5}5{y} \xxhu[0.1]{-2}8{z}
},
\XX[scale=1.6]{\xxr{-4}4\xx{-4}42
\node at (-0.4,1.2) {\tiny $\alpha$};
\node at (0.3,1.2) {\tiny $\beta$};
\xxhu00x \xxh{-4}423{y\ \,}{0.3} \xxh{-4}412{\ \, z}{0.3}
},
\XX[scale=1.6]{\xx00{1.6}\xx00{2.4}
\xxh001{1.6}{\ \ \,z}{0.6}
\xxh00{1.6}{2.4}{y}{0.5}
\xxh00{2.4}3{x\ \ }{0.6}
\node at (0.3,1.2) {\tiny $\beta$};
\node at (-0.3,1.2) {\tiny $\alpha$};
},
\XX[scale=1.6]{\xxlr0{7.5} \draw(0,0)--(0,0.75);
\xxh0023{x\ \,}{0.3} \xxhu[0.12]0{7.5}{z}
\node at (0.16,0.35) {\tiny $y$};
\node at (0.3,1.2) {\tiny $\beta$};
\node at (-0.3,1.2) {\tiny $\alpha$};
},
\ldots\Bigg|\,x,y,z\in X,\alpha,\beta\in\Omega
\right\}.
\end{align*}

For $T \in \frat(X,\Omega)$ and $\omega \in \Omega$, let $^{\omega}T$ be the tree $T$ whose leftmost leaf edge is typed by $\omega$ and
let $T^{\omega}$ be the tree $T$ whose rightmost leaf edge is typed by $\omega$. We also define $^{\omega}|:=|^{\omega}:=|$ and say $l(^{\omega}T)=r(T^{\omega})=\omega$ if $T \neq |$. Graphically, an element $T \in \frat(X, \Omega)$ is of the form
\begin{align*}
T=\treeoo{\cdb o\ocdx[2]{o}{a1}{160}{T_1}{left}
\ocdx[2]{o}{a2}{120}{T_2}{above}
\ocdx[2]{o}{a3}{60}{T_m}{above}
\ocdx[2]{o}{a4}{20}{T_{m+1}}{right}
\node at (140:\xch) {$x_1$};
\node at (90:\xch) {$\cdots$};
\node at (40:\xch) {$x_m$};
},
\end{align*}
with $n \geq 1$, $x_1, \cdots, x_{n} \in X$ and $T_1=|$ or $U_1^{\alpha_{1,r}}$, $T_i=|$ or $\prescript{\alpha_{i,l}}{}{U_i}^{\alpha_{i,r}}$ for $2\leq i \leq m$ and $T_{m+1}=|$ or $\prescript{\alpha_{m+1,l}}{}{U_{m+1}}$ for some $U_j \in \frat(X,\Omega)$ and $\alpha_{j,l}, \alpha_{j,r} \in \Omega$. For each $T\in \frat(X,\Omega)$, denote by $\leaf(T)$ the number of leaf edges of $T$.\\

Now, let $\Omega$ be a set with six products $\leftarrow, \rightarrow, \lhd, \rhd, \cdot, \ast$. For $\omega \in \Omega$, we define products $\prec_{\omega}, \succ_{\omega}, \circ_{\omega}$ on $\bfk \frat(X,\Omega)$ recursively as follows.

For
\begin{align*}
T&=\treeoo{\cdb o\ocdx[2]{o}{a1}{160}{T_1}{left}
\ocdx[2]{o}{a2}{120}{T_2}{above}
\ocdx[2]{o}{a3}{60}{T_m}{above}
\ocdx[2]{o}{a4}{20}{T_{m+1}}{right}
\node at (140:\xch) {$x_1$};
\node at (90:\xch) {$\cdots$};
\node at (40:\xch) {$x_m$};
}&&\text{ and }& U&=\treeoo{\cdb o\ocdx[2]{o}{a1}{160}{U_1}{left}
\ocdx[2]{o}{a2}{120}{U_2}{above}
\ocdx[2]{o}{a3}{60}{U_n}{above}
\ocdx[2]{o}{a4}{20}{U_{n+1}}{right}
\node at (140:\xch) {$y_1$};
\node at (90:\xch) {$\cdots$};
\node at (40:\xch) {$y_n$};
}\, \in \frat(X,\Omega),
\end{align*}
we define $T \prec_{\omega} U, T \succ_{\omega} U, T \circ_{\omega} U$ by induction on $\leaf(T)+\leaf(U)$.
If $\leaf(T)+\leaf(U)=4$, we have
\begin{align*}
T&=\stree x && \text{ and }& U&=\stree y.
\end{align*}
 Then
\begin{align*}
T\prec_\omega U &:=\
\XX{\xxl55
\node at (-0.2,1.2) {\tiny $\omega$};
\xxhu00x \xxhu55y
},&T\succ_\omega U&:= \XX{\xxr{-5}5
\node at (0.2,1.2) {\tiny $\omega$};
\xxhu00y \xxhu{-5}5x
}&& \text{ and }&
T\circ_{\omega} U&:= \XX{\xx002
\xxh0023{x\ }{0.5} \xxh0012{\ \,y}{0.4}
\node at (0,1.3) {\tiny $\omega$};
}.
\end{align*}
For the induction step of $\leaf(T)+\leaf(U)\geq 5$, to define $T\prec_\omega U$, we consider the following two cases.

\noindent {\bf Case 1:} $T_{m+1}=|$. Then
\begin{align*}
T\prec_\omega U:&=\ \treeoo{\cdb o\ocdx[2]{o}{a1}{160}{T_{1}}{left}
\ocdx[2]{o}{a2}{120}{T_{2}}{above}
\ocdx[2]{o}{a3}{60}{T_{m}}{above}
\ocdx[2]{o}{a4}{20}{\prescript{\omega}{}{U}}{right}
\node at (140:\xch) {$x_1$};
\node at (90:\xch) {$\cdots$};
\node at (40:\xch) {$x_m$};
}.
\end{align*}
\noindent {\bf Case 2:} $T_{m+1}\neq |$ and $l(T_{m+1})=\alpha_{m+1}$. Then
\begin{align*}
T\prec_\omega U:&=\ \treeoo{\cdb o\ocdx[2]{o}{a1}{160}{T_{1}}{left}
\ocdx[2]{o}{a2}{120}{T_{2}}{above}
\ocdx[2]{o}{a3}{60}{T_{m}}{above}
\cdx[2]{o}{a4}{20}
\node at (140:\xch) {$x_1$};
\node at (90:\xch) {$\cdots$};
\node at (40:\xch) {$x_m$};
} \left(\begin{array}{c}
\prescript{\alpha_{m+1} \rightarrow \omega}{}{T_{m+1}} \succ_{\alpha_{m+1} \rhd \omega} U\\
+\prescript{\alpha_{m+1} \leftarrow \omega}{}{T_{m+1}} \prec_{\alpha_{m+1} \lhd \omega} U\\
 +\prescript{\alpha_{m+1} \cdot \omega}{}{T_{m+1}} \circ_{\alpha_{m+1} \ast \omega} U
 \end{array}\right).
\end{align*}

To define $T\succ_\omega U$, we consider the following two cases.

\noindent{\bf Case 3:} $U_{1}=|$. Then
\begin{align*}
T\succ_\omega U:&=\ \treeoo{\cdb o\ocdx[2]{o}{a1}{160}{T^{\omega}}{left}
\ocdx[2]{o}{a2}{120}{U_{2}}{above}
\ocdx[2]{o}{a3}{60}{U_{n}}{above}
\ocdx[2]{o}{a4}{20}{U_{n+1}}{right}
\node at (140:\xch) {$y_1$};
\node at (90:\xch) {$\cdots$};
\node at (40:\xch) {$y_n$};
}.
\end{align*}
\noindent {\bf Case 4:} $U_{1}\neq |$ and $r(U_1)=\beta_1$. Then
\begin{align*}
T\succ_\omega U:&=\left(\begin{array}{c}
T \succ_{\omega \rhd \beta_1} U_1^{\omega \rightarrow \beta_{1}}\\
+T \prec_{\omega \lhd \beta_1} U_1^{\omega \leftarrow \beta_1}\\
+T \circ_{\omega \ast \beta_1} U_1^{\omega \cdot \beta_1}
\end{array}\right)
\treeoo{\cdb o\cdx[2]{o}{a1}{160}
\ocdx[2]{o}{a2}{120}{U_{2}}{above}
\ocdx[2]{o}{a3}{60}{U_{n}}{above}
\ocdx[2]{o}{a4}{20}{U_{n+1}}{right}
\node at (140:\xch) {$y_1$};
\node at (90:\xch) {$\cdots$};
\node at (40:\xch) {$y_n$};
}.
\end{align*}
To define $T\circ_{\omega} U$, we consider the following four cases.

\noindent{\bf Case 5:} $T=\stree x$. Then
\begin{align*}
T\circ_{\omega} U:&=\
\treeoo{\cdb o
\cdx[2]{o}{a1}{160}
\ocdx[2]{o}{a2}{120}{\prescript{\omega}{}{U_{1}}}{above}
\ocdx[2]{o}{a3}{60}{U_n}{above}
\ocdx[2]{o}{a4}{20}{U_{n+1}}{right}
\node at (140:\xch) {$x$};
\node at (90:\xch) {$\cdots$};
\node at (40:\xch) {$y_n$};
}.
\end{align*}

\noindent{\bf Case 6:} $T=\treeoo{\cdb o
\cdx[2]{o}{a2}{120}
\ocdx[2]{o}{a3}{60}{\prescript{\alpha}{}{T_2}}{above}
\node at (90:\xch) {$x$};
}$. Then
\begin{align*}
T\circ_{\omega} U:&=\ \stree{x} \circ_{\omega} (T_2 \succ_{\alpha} U).
\end{align*}

\noindent {\bf Case 7:} $T= \treeoo{\cdb o\cdx[2]{o}{a1}{160}
\ocdx[2]{o}{a2}{120}{\prescript{\alpha}{}{T_2}}{above}
\ocdx[2]{o}{a3}{60}{T_{m}}{above}
\ocdx[2]{o}{a4}{20}{T_{m+1}}{right}
\node at (140:\xch) {$x_1$};
\node at (90:\xch) {$\cdots$};
\node at (40:\xch) {$x_m$};
}$ with $m \geq 2$. Then
\begin{align*}
T\circ_{\omega} U:&=\ \stree {x_1} \circ_{\alpha} \left(\treeoo{\cdb o
\ocdx[2]{o}{a2}{135}{T_2}{above}
\ocdx[2]{o}{a3}{90}{T_{m}}{above}
\ocdx[2]{o}{a4}{45}{T_{m+1}}{right}
\node at (110:\xch) {$\cdots$};
\node at (70:\xch) {$x_m$};
} \circ_{\omega} U \right).
\end{align*}

\noindent {\bf Case 8:} $T=\treeoo{\cdb o\ocdx[2]{o}{a1}{160}{T_1^{\alpha}}{left}
\ocdx[2]{o}{a2}{120}{T_2}{above}
\ocdx[2]{o}{a3}{60}{T_m}{above}
\ocdx[2]{o}{a4}{20}{T_{m+1}}{right}
\node at (140:\xch) {$x_1$};
\node at (90:\xch) {$\cdots$};
\node at (40:\xch) {$x_m$};
}$ with $T_1 \neq |$. Then
{\small{\begin{align*}
T\circ_{\omega} U:&=\ T_1 \succ_{\alpha}\left(\treeoo{\cdb o\cdx[2]{o}{a1}{160}
\ocdx[2]{o}{a2}{120}{T_2}{above}
\ocdx[2]{o}{a3}{60}{T_m}{above}
\ocdx[2]{o}{a4}{20}{T_{m+1}}{right}
\node at (140:\xch) {$x_1$};
\node at (90:\xch) {$\cdots$};
\node at (40:\xch) {$x_m$};
} \circ_{\omega} U\right).
\end{align*}}}
Note that for $T, U \in \frat(X,\Omega)$ with $\leaf(T)=m,\leaf(U)=n$, then the trees in $T\prec_{\omega} U, T \succ_{\omega} U, T\circ_{\omega} U$ which are defined in Case~1-7 have  $m+n-1$ leaf edges. Hence Case~8 is in the induction step.

Let $j: X \rightarrow \bfk \frat(X, \Omega), x \mapsto \stree x$ be the natural inclusion. Then with these products defined as above, we obtain the following result:
\begin{theorem}\label{theoprincipal}
Let $\Omega$ be a set with six products $\leftarrow, \rightarrow, \lhd, \rhd, \cdot, \ast$. Then the following conditions are equivalent:
\begin{enumerate}
\item \label{it11} With these products, $\bfk \frat(X,\Omega)$  and the map $j$ is the free $\Omega$-tridendriform algebra generated by $X$.
\item \label{it22} With these products, $\bfk \frat(X,\Omega)$ is an $\Omega$-tridendriform algebra.
\item \label{it33} $(\Omega, \leftarrow, \rightarrow, \lhd, \rhd, \cdot,\ast)$ is an ETS.
\end{enumerate}
\end{theorem}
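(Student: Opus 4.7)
I would establish the cycle $(\ref{it11}) \Rightarrow (\ref{it22}) \Rightarrow (\ref{it33}) \Rightarrow (\ref{it11})$.

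The implication $(\ref{it11}) \Rightarrow (\ref{it22})$ is tautological: a free $\Omega$-tridendriform algebra is in particular an $\Omega$-tridendriform algebra.

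For $(\ref{it22}) \Rightarrow (\ref{it33})$, the strategy is to extract each of the eighteen ETS axioms of Definition~\ref{defiSEDS} by evaluating the seven $\Omega$-tridendriform identities (\ref{eq:tri1})--(\ref{eq:tri7}) on well-chosen triples of small trees in $\frat(X,\Omega)$ and reading off equations by comparing coefficients of specific basis trees. A triple of corollas $\stree{x},\stree{y},\stree{z}$ already produces, after one round of the recursive unfolding of $\prec_\omega,\succ_\omega,\circ_\omega$, a linear combination of decorated trees of leaf-count four whose internal-leaf types lie in $\{\alpha\rightarrow\beta,\,\alpha\leftarrow\beta,\,\alpha\cdot\beta,\,\alpha\rhd\beta,\,\alpha\lhd\beta,\,\alpha\ast\beta\}$; forcing coefficients on the two sides to coincide delivers those ETS axioms involving only two parameters. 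Axioms involving a third parameter $\gamma$ are obtained by substituting one of the corollas with a two-vertex tree whose unique internal leaf is typed by $\gamma$, which injects $\gamma$ into the computation after the first unfolding.

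The core of the proof lies in $(\ref{it33}) \Rightarrow (\ref{it11})$, and splits into two parts. The first, and main obstacle, is to verify that when $\Omega$ is an ETS the products $\prec_\omega,\succ_\omega,\circ_\omega$ satisfy the seven tridendriform axioms on $\bfk\frat(X,\Omega)$. I would do this by a simultaneous induction on $\leaf(T)+\leaf(U)+\leaf(V)$ for $T,U,V\in \frat(X,\Omega)$: for each axiom, both sides are expanded via Cases~1--8 of the recursive definition, subdivided according to whether each of $T_{m+1}$, $U_1$, $V_1$, $V_{r+1}$ equals $|$; the induction hypothesis handles the inner subtrees, and the remaining equality of outer layers matches one of the eighteen equations (\ref{EQ1})--(\ref{eq27}). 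The bookkeeping is delicate because each tridendriform identity splits into several subcases and each subcase must be paired with the correct ETS axiom; the list of eighteen axioms was precisely designed to make this possible, which is also why the converse in the previous step succeeds with no leftover relations.

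For the second part, given any $\Omega$-tridendriform algebra $A$ and any map $f\colon X\to A$, I would build $\widetilde f\colon \bfk\frat(X,\Omega)\to A$ by induction on $\leaf(T)$, with $\widetilde f(\stree{x})=f(x)$. Reading Cases~1--8 in reverse, every tree of leaf-count at least two admits a decomposition $T=T'\diamond_\alpha V$ for some smaller $T',V\in\frat(X,\Omega)$, some $\alpha\in\Omega$ and some $\diamond\in\{\prec,\succ,\circ\}$ dictated by the typing of the leaves adjacent to the root; transporting this decomposition to $A$ defines $\widetilde f(T)$. Uniqueness of $\widetilde f$ is immediate, since $X$ generates $\bfk\frat(X,\Omega)$ under these products, and the well-definedness (independence of the chosen decomposition, hence compatibility with the three families of products) is guaranteed by the fact that the $\Omega$-tridendriform axioms hold both in $\bfk\frat(X,\Omega)$ (by the first part) and in $A$ (by hypothesis).
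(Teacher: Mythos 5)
Your proposal is correct and takes essentially the same route as the paper: the tautological implication, extraction of the ETS axioms by evaluating the tridendriform identities on small trees (the paper in fact gets all eighteen from a single instance of Eq.~(\ref{eq:tri3}) applied to two corollas and a three-leaf tree carrying the type $\gamma$), the big induction on $\leaf(T)+\leaf(U)+\leaf(V)$ organized by Cases~1--8, and the universal map built by induction on leaf count via the canonical root decomposition. The only cosmetic slip is that the decomposition $T=T'\diamond_\alpha V$ exists for trees with at least \emph{three} leaves, the two-leaf corollas being the generators.
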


\begin{proof}
\ref{it11} $\Longrightarrow$ \ref{it22} It is obvious.

\ref{it22} $\Longrightarrow$ \ref{it33} For $\alpha, \beta, \gamma \in \Omega$ and $\stree a, \stree b, \XX{\xxr{-5}5
\node at (0.2,1.1) {$\gamma$};
\xxhu00d \xxhu{-5}5c
} \in \bfk \frat(X,\Omega)$, by calculation there are both eleven trees in the expression of
\[\stree a \succ_{\alpha} \left(\stree b \succ_{\beta} \XX{\xxr{-5}5
\node at (0.2,1.1) {$\gamma$};
\xxhu00d \xxhu{-5}5c
}\right )\]
and of
\begin{align*}
&\left(\stree a \succ_{\alpha \rhd \beta} \stree b\right) \succ_{\alpha \rightarrow \beta} \XX{\xxr{-5}5
\node at (0.2,1.1) {$\gamma$};
\xxhu00d \xxhu{-5}5c
} +\left(\stree a \prec_{\alpha \lhd \beta} \stree b\right) \succ_{\alpha \leftarrow \beta} \XX{\xxr{-5}5
\node at (0.2,1.1) {$\gamma$};
\xxhu00d \xxhu{-5}5c
}\\
&+ \left(\stree a \circ_{\alpha\ast \beta} \stree b\right) \succ_{\alpha \cdot \beta} \XX{\xxr{-5}5
\node at (0.2,1.1) {$\gamma$};
\xxhu00d \xxhu{-5}5c
}.\end{align*}

 Identifying the types of the trees in these expressions, we get that $(\Omega, \leftarrow, \rightarrow, \lhd, \rhd, \cdot,\ast)$ is an ETS.

\ref{it33} $\Longrightarrow$ \ref{it22}
Extending the products $\prec_{\omega}, \succ_{\omega}, \circ_{\omega}$ to the space
\[\Big(\bfk \frat(X,\Omega) \ot \bfk \frat(X,\Omega) \Big)\oplus\Big(\bfk |\ot\bfk \frat(X,\Omega)\Big)\oplus \Big(\bfk \frat(X,\Omega) \ot\bfk |\Big)\]
by
\begin{align*}
|\succ_\omega T&:=T\prec_\omega |:=T,&
 |\prec_\omega T&:=T\succ_\omega |:=0 &&\text{ and }& |\circ_{\omega} T&:=T\circ_{\omega} |:=0,
\end{align*}
for $\omega\in\Omega$ and $T\in \bfk \frat(X,\Omega)$.
By convention, we consider the added element $\emptyset$ as a unit for the six products of $\Omega$. Then the products $\prec_{\omega}, \succ_{\omega}$ can be rewritten in the following way: for
\begin{align*}
T&=\treeoo{\cdb o\ocdx[2]{o}{a1}{160}{T_1}{left}
\ocdx[2]{o}{a2}{120}{T_2}{above}
\ocdx[2]{o}{a3}{60}{T_m}{above}
\ocdx[2]{o}{a4}{20}{\prescript{\alpha_{m+1}}{}{T_{m+1}}}{right}
\node at (140:\xch) {$x_1$};
\node at (90:\xch) {$\cdots$};
\node at (40:\xch) {$x_m$};
}&&\text{ and }& U&=\treeoo{\cdb o\ocdx[2]{o}{a1}{160}{U_1^{\beta_1}}{left}
\ocdx[2]{o}{a2}{120}{U_2}{above}
\ocdx[2]{o}{a3}{60}{U_n}{above}
\ocdx[2]{o}{a4}{20}{U_{n+1}}{right}
\node at (140:\xch) {$y_1$};
\node at (90:\xch) {$\cdots$};
\node at (40:\xch) {$y_n$};
}\, \in \frat(X,\Omega),
\end{align*}
then
\begin{align*}
T \prec_{\omega} |&=\ | \succ_{\omega} T=T,\\
| \prec_{\omega} T&=\ T \succ_{\omega} |=0,\\
|\circ_{\omega} T&=\ T\circ_{\omega} |=0,\\
T\prec_\omega U&:= \treeoo{\cdb o\ocdx[2]{o}{a1}{160}{T_{1}}{left}
\ocdx[2]{o}{a2}{120}{T_{2}}{above}
\ocdx[2]{o}{a3}{60}{T_{m}}{above}
\cdx[2]{o}{a4}{20}
\node at (140:\xch) {$x_1$};
\node at (90:\xch) {$\cdots$};
\node at (40:\xch) {$x_m$};
}\left(\begin{array}{c}
\prescript{\alpha_{m+1} \rightarrow \omega}{}{T_{m+1}} \succ_{\alpha_{m+1} \rhd \omega} U\\
+\prescript{\alpha_{m+1} \leftarrow \omega}{}{T_{m+1}} \prec_{\alpha_{m+1} \lhd \omega} U\\
 +\prescript{\alpha_{m+1} \cdot \omega}{}{T_{m+1}} \circ_{\alpha_{m+1} \ast \omega} U
\end{array}\right),\\
T\succ_\omega U&:=\left(\begin{array}{c}
T \succ_{\omega \rhd \beta_1} U_1^{\omega \rightarrow \beta_{1}}\\
+T \prec_{\omega \lhd \beta_1} U_1^{\omega \leftarrow \beta_1}\\
+T \circ_{\omega \ast \beta_1} U_1^{\omega \cdot \beta_1}
\end{array}\right)
\treeoo{\cdb o\cdx[2]{o}{a1}{160}
\ocdx[2]{o}{a2}{120}{U_{2}}{above}
\ocdx[2]{o}{a3}{60}{U_{n}}{above}
\ocdx[2]{o}{a4}{20}{U_{n+1}}{right}
\node at (140:\xch) {$y_1$};
\node at (90:\xch) {$\cdots$};
\node at (40:\xch) {$y_n$};
}.\\
%
\end{align*}

We first show that $\bfk \frat(X,\Omega)$ is an $\Omega$-tridendriform algebra. We prove Eqs.~(\ref{eq:tri1})-(\ref{eq:tri7}) hold for \begin{align*}
 T&=\treeoo{\cdb o\ocdx[2]{o}{a1}{160}{T_1}{left}
\ocdx[2]{o}{a2}{120}{T_2}{above}
\ocdx[2]{o}{a3}{60}{T_l}{above}
\ocdx[2]{o}{a4}{20}{T_{l+1}}{right}
\node at (140:\xch) {$x_1$};
\node at (90:\xch) {$\cdots$};
\node at (40:\xch) {$x_m$};
},&U&=\treeoo{\cdb o\ocdx[2]{o}{a1}{160}{U_1}{left}
\ocdx[2]{o}{a2}{120}{U_2}{above}
\ocdx[2]{o}{a3}{60}{U_m}{above}
\ocdx[2]{o}{a4}{20}{U_{m+1}}{right}
\node at (140:\xch) {$y_1$};
\node at (90:\xch) {$\cdots$};
\node at (40:\xch) {$y_n$};
}&&\text{ and }&V&=\treeoo{\cdb o\ocdx[2]{o}{a1}{160}{V_1}{left}
\ocdx[2]{o}{a2}{120}{V_2}{above}
\ocdx[2]{o}{a3}{60}{V_n}{above}
\ocdx[2]{o}{a4}{20}{V_{n+1}}{right}
\node at (140:\xch) {$z_1$};
\node at (90:\xch) {$\cdots$};
\node at (40:\xch) {$z_n$};
}\, \in \frat(X,\Omega)
\end{align*}
by induction on the sum $p:=\leaf(T)+\leaf(U)+\leaf(V)$.
If $p=6$, then $\leaf(T)=\leaf(U)=\leaf(V)=2$ and $T, U, V$ are of the form
\begin{align*}
T &=\stree x,&U&=\stree y && \text{ and } & V&=\stree z.
\end{align*}
Eqs.~(\ref{eq:tri1})-(\ref{eq:tri7}) hold by direct calculation.

Suppose that Eqs.~(\ref{eq:tri1})-(\ref{eq:tri7}) hold for $p \leq q$, where $q \geq 6$ is a fixed positive integer. Consider the case of $p=q+1$. First, we prove Eq.~(\ref{eq:tri1}) and we assume $l(T_{l+1})=\alpha_{l+1}$ if $T_{l+1} \neq |$. Then
\begin{align*}
&\ (T \prec_{\alpha} U) \prec_{\beta} V\\
&=\  \left( \treeoo{\cdb o\ocdx[2]{o}{a1}{160}{T_{1}}{left}
\ocdx[2]{o}{a2}{120}{T_{2}}{above}
\ocdx[2]{o}{a3}{60}{T_{l}}{above}
\cdx[2]{o}{a4}{20}
\node at (140:\xch) {$x_1$};
\node at (90:\xch) {$\cdots$};
\node at (40:\xch) {$x_l$};
} \left(\begin{array}{c}
\prescript{\alpha_{m+1} \rightarrow \alpha}{}{T_{m+1}} \succ_{\alpha_{m+1} \rhd \alpha} U\\
+\prescript{\alpha_{m+1} \leftarrow \alpha}{}{T_{m+1}} \prec_{\alpha_{m+1} \lhd \alpha} U\\
\\ +\prescript{\alpha_{m+1} \cdot \alpha}{}{T_{m+1}} \circ_{\alpha_{m+1} \ast \alpha} U
\end{array}\right)\right) \prec_{\beta} V\\
&=\  \treeoo{\cdb o\ocdx[2]{o}{a1}{160}{T_{1}}{left}
\ocdx[2]{o}{a2}{120}{T_{2}}{above}
\ocdx[2]{o}{a3}{60}{T_{l}}{above}
\cdx[2]{o}{a4}{20}
\node at (140:\xch) {$x_1$};
\node at (90:\xch) {$\cdots$};
\node at (40:\xch) {$x_l$};
} \left(
\begin{array}{c}
(\prescript{(\alpha_{l+1} \rightarrow \alpha) \rightarrow \beta}{}{T_{l+1}} \succ_{\alpha_{l+1} \rhd \alpha} U) \succ_{(\alpha_{l+1}\rightarrow \alpha) \rhd \beta} V \\
\ +(\prescript{(\alpha_{l+1} \rightarrow \alpha)\leftarrow \beta}{}{T_{l+1}} \succ_{\alpha_{l+1} \rhd \alpha} U) \prec_{(\alpha_{l+1} \rightarrow \alpha) \lhd \beta} V \\
+(\prescript{(\alpha_{l+1} \rightarrow \alpha)\cdot \beta}{}{T_{l+1}} \succ_{\alpha_{l+1} \rhd \alpha} U) \circ_{(\alpha_{l+1} \rightarrow \alpha) \ast \beta} V \\
\ +(\prescript{(\alpha_{l+1} \leftarrow \alpha) \rightarrow \beta}{}{T_{l+1}} \prec_{\alpha_{l+1} \lhd \alpha} U)\succ_{(\alpha_{l+1} \leftarrow \alpha)\rhd \beta} V \\
+(\prescript{(\alpha_{l+1} \leftarrow \alpha) \leftarrow \beta}{}{T_{l+1}} \prec_{\alpha_{l+1} \lhd \alpha} U) \prec_{(\alpha_{l+1} \leftarrow \alpha) \lhd \beta} V \\
\ +(\prescript{(\alpha_{l+1} \leftarrow \alpha) \cdot \beta}{}{T_{l+1}} \prec_{\alpha_{l+1} \lhd \alpha} U) \circ_{(\alpha_{l+1} \leftarrow \alpha) \ast \beta} V\\
+(\prescript{(\alpha_{l+1} \cdot \alpha) \rightarrow \beta}{}{T_{l+1}} \circ_{\alpha_{l+1} \ast \alpha} U) \succ_{(\alpha_{l+1} \cdot \alpha) \rhd \beta} V \\
\ +(\prescript{(\alpha_{l+1} \cdot \alpha) \leftarrow \beta}{}{T_{l+1}} \circ_{\alpha_{l+1}\ast \alpha} U) \prec_{(\alpha_{l+1} \cdot \alpha) \lhd \beta} V\\
+(\prescript{(\alpha_{l+1} \cdot \alpha) \cdot \beta}{}{T_{l+1}} \circ_{\alpha_{l+1} \ast \alpha} U) \circ_{(\alpha_{l+1} \cdot \alpha) \ast \beta} V
\end{array}\right)\\
&=\  \treeoo{\cdb o\ocdx[2]{o}{a1}{160}{T_{1}}{left}
\ocdx[2]{o}{a2}{120}{T_{2}}{above}
\ocdx[2]{o}{a3}{60}{T_{l}}{above}
\cdx[2]{o}{a4}{20}
\node at (140:\xch) {$x_1$};
\node at (90:\xch) {$\cdots$};
\node at (40:\xch) {$x_l$};
}\left(\begin{array}{c}
\prescript{\alpha_{l+1} \rightarrow (\alpha \rightarrow \beta)}{}{T_{l+1}} \succ_{\alpha_{l+1}\rhd (\alpha \rightarrow \beta)} (U \succ_{\alpha \rhd \beta} V)\\
 +\prescript{\alpha_{l+1} \leftarrow (\alpha \rightarrow \beta)}{}{T_{l+1}} \prec_{\alpha_{l+1} \lhd (\alpha \rightarrow \beta)}(U \succ_{\alpha \rhd \beta} V)\\
 +\prescript{\alpha_{l+1} \cdot(\alpha \rightarrow \beta)}{}{T_{l+1}} \circ_{\alpha_{l+1} \ast (\alpha \rightarrow \beta)}(U \succ_{\alpha \rhd \beta} V)\\
 +\prescript{\alpha_{l+1} \rightarrow (\alpha \leftarrow \beta)}{}{T_{l+1}} \succ_{\alpha_{l+1} \rhd(\alpha \leftarrow \beta)} (U \prec_{\alpha \lhd \beta} V)\\
 +\prescript{\alpha_{l+1} \leftarrow (\alpha \leftarrow \beta)}{}{T_{l+1}} \prec_{\alpha_{l+1} \lhd (\alpha \leftarrow \beta)}(U \prec_{\alpha \lhd \beta} V) \\
 +\prescript{\alpha_{l+1} \cdot (\alpha \leftarrow \beta)}{}{T_{l+1}} \circ_{\alpha_{l+1} \ast (\alpha \leftarrow \beta)} (U \prec_{\alpha \lhd \beta} V) \\
 +\prescript{\alpha_{l+1} \rightarrow (\alpha \cdot \beta)}{}{T_{l+1}} \succ_{\alpha_{l+1} \rhd (\alpha \cdot \beta)} (U \circ_{\alpha \ast \beta} V) \\
 +\prescript{\alpha_{l+1} \leftarrow (\alpha \cdot \beta)}{}{T_{l+1}} \prec_{\alpha_{l+1} \lhd (\alpha \cdot \beta)} (U \circ_{\alpha \ast \beta} V) \\
 +\prescript{\alpha_{l+1} \cdot(\alpha \cdot \beta)}{}{T_{l+1}} \circ_{\alpha_{l+1} \ast (\alpha \cdot \beta)}(U \circ_{\alpha \ast \beta} V)
\end{array} \right)\\
&\ \hspace{4cm} \text{(by induction hypothesis and $(\Omega, \leftarrow, \rightarrow, \lhd, \rhd, \cdot, \ast)$ being an ETS)}\\
&=\ T \prec_{\alpha \rightarrow \beta} (U \succ_{\alpha \rhd \beta} V)+T \prec_{\alpha \leftarrow \beta} (U \prec_{\alpha \lhd \beta} V)+ T\prec_{\alpha \cdot \beta} (U \circ_{\alpha \ast \beta} V).
\end{align*}
Hence Eq.~(\ref{eq:tri1}) holds. Eq.(\ref{eq:tri3}) can be proved similarly.  Eq.~(\ref{eq:tri2}) holds directly as $T \succ_{\alpha} U$ changes the leftmost branch of $U$ and $U \prec_{\beta}V$ changes the rightmost branch of $U$ and $U$ has at least two branches. Now we show Eq.~(\ref{eq:tri4}) holds. If $U_1=|$, then $(T \succ_{\alpha} U) \circ_{\beta} V=T \succ_{\alpha} (U \circ_{\beta} V)$ by the definition of $\circ_{\omega}$ in Case 8. If $U_1 \neq |$, we assume $r(U_1)=\beta_1$, then
\begin{align*}
&\ (T \succ_{\alpha} U) \circ_{\beta} V\\
&=\ \left( \big(T \succ_{\alpha \rhd \beta_1} U_1^{\alpha \rightarrow \beta_{1}}+T \prec_{\alpha \lhd \beta_1} U_1^{\alpha \leftarrow \beta_1}+T \circ_{\alpha \ast \beta_1} U_1^{\alpha \cdot \beta_1}\big)
\treeoo{\cdb o\cdx[2]{o}{a1}{160}
\ocdx[2]{o}{a2}{120}{U_{2}}{above}
\ocdx[2]{o}{a3}{60}{U_{n}}{above}
\ocdx[2]{o}{a4}{20}{U_{n+1}}{right}
\node at (140:\xch) {$y_1$};
\node at (90:\xch) {$\cdots$};
\node at (40:\xch) {$y_n$};
}\right) \circ_{\beta} V\\
&=\ \left(\begin{array}{c}
 (T \succ_{\alpha \rhd \beta_1} U_1) \succ_{\alpha \rightarrow \beta_1} \treeoo{\cdb o\cdx[2]{o}{a1}{160}
\ocdx[2]{o}{a2}{120}{U_{2}}{above}
\ocdx[2]{o}{a3}{60}{U_{n}}{above}
\ocdx[2]{o}{a4}{20}{U_{n+1}}{right}
\node at (140:\xch) {$y_1$};
\node at (90:\xch) {$\cdots$};
\node at (40:\xch) {$y_n$};
}+(T \prec_{\alpha \lhd \beta_1} U_1)\succ_{\alpha \leftarrow \beta_1} \treeoo{\cdb o\cdx[2]{o}{a1}{160}
\ocdx[2]{o}{a2}{120}{U_{2}}{above}
\ocdx[2]{o}{a3}{60}{U_{n}}{above}
\ocdx[2]{o}{a4}{20}{U_{n+1}}{right}
\node at (140:\xch) {$y_1$};
\node at (90:\xch) {$\cdots$};
\node at (40:\xch) {$y_n$};
}\\
 +(T \circ_{\alpha \ast \beta_1} U_1) \succ_{\alpha \cdot \beta_1} \treeoo{\cdb o\cdx[2]{o}{a1}{160}
\ocdx[2]{o}{a2}{120}{U_{2}}{above}
\ocdx[2]{o}{a3}{60}{U_{n}}{above}
\ocdx[2]{o}{a4}{20}{U_{n+1}}{right}
\node at (140:\xch) {$y_1$};
\node at (90:\xch) {$\cdots$};
\node at (40:\xch) {$y_n$};
} \end{array}\right) \circ_{\beta} V\\
&=\ (T \succ_{\alpha \rhd \beta_1} U_1) \succ_{\alpha \rightarrow \beta_1} \left(  \treeoo{\cdb o\cdx[2]{o}{a1}{160}
\ocdx[2]{o}{a2}{120}{U_{2}}{above}
\ocdx[2]{o}{a3}{60}{U_{n}}{above}
\ocdx[2]{o}{a4}{20}{U_{n+1}}{right}
\node at (140:\xch) {$y_1$};
\node at (90:\xch) {$\cdots$};
\node at (40:\xch) {$y_n$};
} \circ_{\beta} V\right)+(T \prec_{\alpha \lhd \beta_1} U_1)\succ_{\alpha \leftarrow \beta_1} \left(  \treeoo{\cdb o\cdx[2]{o}{a1}{160}
\ocdx[2]{o}{a2}{120}{U_{2}}{above}
\ocdx[2]{o}{a3}{60}{U_{n}}{above}
\ocdx[2]{o}{a4}{20}{U_{n+1}}{right}
\node at (140:\xch) {$y_1$};
\node at (90:\xch) {$\cdots$};
\node at (40:\xch) {$y_n$};
} \circ_{\beta} V\right)\\
&\ +(T \circ_{\alpha \ast \beta_1} U_1) \succ_{\alpha \cdot \beta_1} \left(  \treeoo{\cdb o\cdx[2]{o}{a1}{160}
\ocdx[2]{o}{a2}{120}{U_{2}}{above}
\ocdx[2]{o}{a3}{60}{U_{n}}{above}
\ocdx[2]{o}{a4}{20}{U_{n+1}}{right}
\node at (140:\xch) {$y_1$};
\node at (90:\xch) {$\cdots$};
\node at (40:\xch) {$y_n$};
} \circ_{\beta} V\right) \hspace{1.5cm} \text{(by Case 8)}\\
&=\ T \succ_{\alpha} \left(U_1 \succ_{\beta_1} \left(  \treeoo{\cdb o\cdx[2]{o}{a1}{160}
\ocdx[2]{o}{a2}{120}{U_{2}}{above}
\ocdx[2]{o}{a3}{60}{U_{n}}{above}
\ocdx[2]{o}{a4}{20}{U_{n+1}}{right}
\node at (140:\xch) {$y_1$};
\node at (90:\xch) {$\cdots$};
\node at (40:\xch) {$y_n$};
} \circ_{\beta} V \right) \right)  \hspace{1.9cm} \text{(by Eq.~(\ref{eq:tri3}))}\\
&=\ T \succ_{\alpha} \left( \left(U_1 \succ_{\beta_1} \treeoo{\cdb o\cdx[2]{o}{a1}{160}
\ocdx[2]{o}{a2}{120}{U_{2}}{above}
\ocdx[2]{o}{a3}{60}{U_{n}}{above}
\ocdx[2]{o}{a4}{20}{U_{n+1}}{right}
\node at (140:\xch) {$y_1$};
\node at (90:\xch) {$\cdots$};
\node at (40:\xch) {$y_n$};
} \right) \circ_{\beta} V  \right)  \hspace{1.9cm}\text{(by the induction hypothesis)}\\
&=\ T \succ_{\alpha} (U \circ_{\beta} V).
\end{align*}
Hence Eq.~(\ref{eq:tri4}) holds. Now we show Eq.~(\ref{eq:tri5}) holds. If $T=\stree x$, then $(T \prec_{\alpha} U) \circ_{\beta} V=T \circ_{\beta} (U \succ_{\alpha} V)$ by the definition of $\circ_{\omega}$ in Case 6. For general case, we assume $l(T_{l+1})=\alpha_{l+1}$ if $T_{l+1} \neq |$, then
\begin{align*}
&\ (T \prec_{\alpha} U) \circ_{\beta} V\\
&=\ \left( \treeoo{\cdb o\ocdx[2]{o}{a1}{160}{T_{1}}{left}
\ocdx[2]{o}{a2}{120}{T_{2}}{above}
\ocdx[2]{o}{a3}{60}{T_{m}}{above}
\cdx[2]{o}{a4}{20}
\node at (140:\xch) {$x_1$};
\node at (90:\xch) {$\cdots$};
\node at (40:\xch) {$x_m$};
} \left(\begin{array}{c}
\prescript{\alpha_{m+1} \rightarrow \alpha}{}{T_{m+1}} \succ_{\alpha_{m+1} \rhd \alpha} U+\prescript{\alpha_{m+1} \leftarrow \alpha}{}{T_{m+1}} \prec_{\alpha_{m+1} \lhd \alpha} U\\
\ +\prescript{\alpha_{m+1} \cdot \alpha}{}{T_{m+1}} \circ_{\alpha_{m+1} \ast \alpha} U
\end{array}\right)\right) \circ_{\beta} V\\
&=\ \left(\begin{array}{c}
\treeoo{\cdb o\ocdx[2]{o}{a1}{160}{T_{1}}{left}
\ocdx[2]{o}{a2}{120}{T_{2}}{above}
\ocdx[2]{o}{a3}{60}{T_{m}}{above}
\cdx[2]{o}{a4}{20}
\node at (140:\xch) {$x_1$};
\node at (90:\xch) {$\cdots$};
\node at (40:\xch) {$x_m$};
} \prec_{\alpha_{l+1} \rightarrow \alpha} (T_{m+1} \succ_{\alpha_{m+1} \rhd \alpha} U)+\treeoo{\cdb o\ocdx[2]{o}{a1}{160}{T_{1}}{left}
\ocdx[2]{o}{a2}{120}{T_{2}}{above}
\ocdx[2]{o}{a3}{60}{T_{m}}{above}
\cdx[2]{o}{a4}{20}
\node at (140:\xch) {$x_1$};
\node at (90:\xch) {$\cdots$};
\node at (40:\xch) {$x_m$};
} \prec_{\alpha_{l+1} \leftarrow \alpha} (T_{m+1} \prec_{\alpha_{m+1} \lhd \alpha} U)\\
\ +\treeoo{\cdb o\ocdx[2]{o}{a1}{160}{T_{1}}{left}
\ocdx[2]{o}{a2}{120}{T_{2}}{above}
\ocdx[2]{o}{a3}{60}{T_{m}}{above}
\cdx[2]{o}{a4}{20}
\node at (140:\xch) {$x_1$};
\node at (90:\xch) {$\cdots$};
\node at (40:\xch) {$x_m$};
} \prec_{\alpha_{l+1} \cdot \alpha}(T_{m+1} \circ_{\alpha_{m+1} \ast \alpha} U)
\end{array} \right) \circ_{\beta} V.
\end{align*}
Next we consider the form of $\treeoo{\cdb o\ocdx[2]{o}{a1}{160}{T_{1}}{left}
\ocdx[2]{o}{a2}{120}{T_{2}}{above}
\ocdx[2]{o}{a3}{60}{T_{m}}{above}
\cdx[2]{o}{a4}{20}
\node at (140:\xch) {$x_1$};
\node at (90:\xch) {$\cdots$};
\node at (40:\xch) {$x_m$};
}$ as following:
\begin{enumerate}
\item If $\treeoo{\cdb o\ocdx[2]{o}{a1}{160}{T_{1}}{left}
\ocdx[2]{o}{a2}{120}{T_{2}}{above}
\ocdx[2]{o}{a3}{60}{T_{m}}{above}
\cdx[2]{o}{a4}{20}
\node at (140:\xch) {$x_1$};
\node at (90:\xch) {$\cdots$};
\node at (40:\xch) {$x_m$};
}=\stree x$ for some $x \in X$, then
\begin{align*}
&\left(\begin{array}{c}
 \stree x \prec_{\alpha_{l+1} \rightarrow \alpha} (T_{m+1} \succ_{\alpha_{m+1} \rhd \alpha} U)+\stree x \prec_{\alpha_{l+1} \leftarrow \alpha} (T_{m+1} \prec_{\alpha_{m+1} \lhd \alpha} U)\\
 \ +\stree x \prec_{\alpha_{l+1} \cdot \alpha}(T_{m+1} \circ_{\alpha_{m+1} \ast \alpha} U)
 \end{array}\right) \circ_{\beta} V\\
&=\ \stree x \circ_{\beta} \left(\begin{array}{c}
(T_{l+1} \succ_{\alpha_{l+1} \rhd \alpha} U) \succ_{\alpha_{l+1} \rightarrow \alpha} V+(T_{l+1} \prec_{\alpha_{l+1} \lhd \alpha} U) \succ_{\alpha_{l+1} \leftarrow \alpha} V\\
\ + (T_{l+1} \circ_{\alpha_{l+1} \ast \alpha} U) \succ_{\alpha_{l+1} \cdot \alpha} V
\end{array}\right) && \text{(by Case 6)}\\
&=\ \stree x \circ_{\beta} \big(T_{l+1} \succ_{\alpha_{l+1}} (U \succ_{\alpha} V) \big) &&\text{(by Eq.~(\ref{eq:tri3}))}\\
&=\ \left(\stree x \prec_{\alpha_{l+1}} T_{l+1}\right) \circ_{\beta} (U \succ_{\alpha} V) &&\text{(by Case 6)}\\
&=\ T \circ_{\beta} (U \succ_{\alpha} V).
\end{align*}

\item If there are $T',T'' \in \frat(X,\Omega)$ and $\omega \in \Omega$ such that $\treeoo{\cdb o\ocdx[2]{o}{a1}{160}{T_{1}}{left}
\ocdx[2]{o}{a2}{120}{T_{2}}{above}
\ocdx[2]{o}{a3}{60}{T_{m}}{above}
\cdx[2]{o}{a4}{20}
\node at (140:\xch) {$x_1$};
\node at (90:\xch) {$\cdots$};
\node at (40:\xch) {$x_m$};
}=T' \succ_{\omega} T''$, then
\begin{align*}
&\ \left(\begin{array}{c}
(T' \succ_{\omega} T'') \prec_{\alpha_{l+1} \rightarrow \alpha} (T_{m+1} \succ_{\alpha_{m+1} \rhd \alpha} U)\\
+(T' \succ_{\omega} T'') \prec_{\alpha_{l+1} \leftarrow \alpha} (T_{m+1} \prec_{\alpha_{m+1} \lhd \alpha} U)\\
 \ +(T' \succ_{\omega} T'') \prec_{\alpha_{l+1} \cdot \alpha}(T_{m+1} \circ_{\alpha_{m+1} \ast \alpha} U)
 \end{array}\right)\circ_{\beta} V\\
&=\ \left(\begin{array}{c}
 T' \succ_{\omega} \big( T'' \prec_{\alpha_{l+1} \rightarrow \alpha} (T_{m+1} \succ_{\alpha_{m+1} \rhd \alpha} U)\\
 + T''  \prec_{\alpha_{l+1} \leftarrow \alpha} (T_{m+1} \prec_{\alpha_{m+1} \lhd \alpha} U) \\
\ +T'' \prec_{\alpha_{l+1} \cdot \alpha}(T_{m+1} \circ_{\alpha_{m+1} \ast \alpha} U)\big)
\end{array}\right) \circ_{\beta} V && \text{(by Eq.~(\ref{eq:tri2}))}\\
&=\ \big( T' \succ_{\omega} ((T'' \prec_{\alpha_{l+1}} T_{l+1}) \prec_{\alpha} U) \big) \circ_{\beta} V && \text{(by Eq.~(\ref{eq:tri1}))}\\
&=\ T' \succ_{\omega} \big(((T'' \prec_{\alpha_{l+1}} T_{l+1}) \prec_{\alpha} U) \circ_{\beta} V \big) && \text{(by Case 8)}\\
&=\ T' \succ_{\omega} \big((T'' \prec_{\alpha_{l+1}} T_{l+1}) \circ_{\beta} (U \succ_{\alpha} V) \big) &&\text{(by induction hypothesis)}\\
&=\ \big( T' \succ_{\omega} (T''\prec_{\alpha_{l+1}} T_{l+1}) \big) \circ_{\beta}(U \succ_{\alpha} V) &&\text{(by Case 8)}\\
&=\ \big((T' \succ_{\omega} T'') \prec_{\alpha_{l+1}} T_{l+1} \big) \circ_{\beta}(U \succ_{\alpha} V)&&
\text{(by Eq.~(\ref{eq:tri2})}\\
&=\ T \circ_{\beta} (U \succ_{\alpha} V).
\end{align*}

\item If there are $\stree x,T'' \in \frat(X,\Omega)$ and $\omega \in \Omega$ such that $\treeoo{\cdb o\ocdx[2]{o}{a1}{160}{T_{1}}{left}
\ocdx[2]{o}{a2}{120}{T_{2}}{above}
\ocdx[2]{o}{a3}{60}{T_{m}}{above}
\cdx[2]{o}{a4}{20}
\node at (140:\xch) {$x_1$};
\node at (90:\xch) {$\cdots$};
\node at (40:\xch) {$x_m$};
}=\stree x \circ_{\omega} T''$, then
\begin{align*}
&\left( \begin{array}{c}
\left(\stree x \circ_{\omega} T''\right) \prec_{\alpha_{l+1} \rightarrow \alpha} (T_{m+1} \succ_{\alpha_{m+1} \rhd \alpha} U)\\
+\left(\stree x \circ_{\omega} T''\right) \prec_{\alpha_{l+1} \leftarrow \alpha} (T_{m+1} \prec_{\alpha_{m+1} \lhd \alpha} U)\\
 \ +\left(\stree x \circ_{\omega} T''\right) \prec_{\alpha_{l+1} \cdot \alpha}(T_{m+1} \circ_{\alpha_{m+1} \ast \alpha} U)
 \end{array}\right) \circ_{\beta} V\\
&=\ \left(\begin{array}{c}
\stree x \circ_{\omega} \left(\begin{array}{c}
T'' \prec_{\alpha_{l+1} \rightarrow \alpha} (T_{m+1} \succ_{\alpha_{m+1} \rhd \alpha} U)\\
+ T''  \prec_{\alpha_{l+1} \leftarrow \alpha} (T_{m+1} \prec_{\alpha_{m+1} \lhd \alpha} U) \\
 +T'' \prec_{\alpha_{l+1} \cdot \alpha}(T_{m+1} \circ_{\alpha_{m+1} \ast \alpha} U)
\end{array} \right)
\end{array}\right)\circ_{\beta} V &&\text{(by Case 7)}\\
&=\ \left( \stree x \circ_{\omega} ((T'' \prec_{\alpha_{l+1}} T_{l+1}) \prec_{\alpha} U) \right) \circ_{\beta} V &&\text{(by Eq.~(\ref{eq:tri1}))}\\
&=\ \stree x \circ_{\omega} \big(((T'' \prec_{\alpha_{l+1}} T_{l+1}) \prec_{\alpha} U) \circ_{\beta} V \big) &&\text{(by Case 7)}\\
&=\ \stree x \circ_{\omega} \big((T'' \prec_{\alpha_{l+1}} T_{l+1}) \circ_{\beta} (U \succ_{\alpha} V) \big) && \text{(by induction hypothesis)}\\
&=\ \left( \stree x \circ_{\omega} (T''\prec_{\alpha_{l+1}} T_{l+1}) \right) \circ_{\beta}(U \succ_{\alpha} V) &&\text{(by Case 7)}\\
&=\ \left(\left(\stree x \circ_{\omega} T''\right) \prec_{\alpha_{l+1}} T_{l+1} \right) \circ_{\beta}(U \succ_{\alpha} V)
 &&\text{(by induction hypothesis and Eq.~(\mref{eq:tri6}))}\\
&=\ T \circ_{\beta} (U \succ_{\alpha} V).
\end{align*}
\end{enumerate}
Hence Eq.~(\ref{eq:tri5}) holds.  Eq.~(\ref{eq:tri6}) holds directly as $T \circ_{\alpha} U$ does not change the rightmost branch of $U$ and $U \prec_{\beta} V$ only changes the rightmost branch of $U$ and $U$ has at least two branches. Finally, we show Eq.~(\ref{eq:tri7}) holds by induction on $\leaf(T)$. If $T=\stree x$ for some $x \in X$, then $(T \circ_{\alpha} U) \circ_{\beta} V=T \circ_{\alpha} (U \circ_{\beta} V)$ by the definition of $\circ_{\omega}$ in Case 7. Suppose Eq.~(\ref{eq:tri7}) holds for all $T$, where $\leaf(T) \leq q$ with $q$ a fixed integer. Assume $\leaf(T)=p+1$, we consider the form of $T$ as follows.
\begin{enumerate}
\item If there are $T', T'' \in \frat(X,\Omega)$ and $\omega \in \Omega$ such that $T=T' \prec_{\omega} T''$, then
\begin{align*}
&\ (T \circ_{\alpha} U) \circ_{\beta} V=\big((T' \prec_{\omega} T'')\circ_{\alpha} U\big) \circ_{\beta} V\\
&=\ \big(T' \circ_{\alpha} (T'' \succ_{\omega} U) \big) \circ_{\beta} V && \text{(by Eq.~{\ref{eq:tri5}})}\\
&=\ T' \circ_{\alpha} \big((T'' \succ_{\omega} U) \circ_{\beta} V \big) && \text{(by induction hypothesis)}\\
&=\ T' \circ_{\alpha} \big(T'' \succ_{\omega} (U \circ_{\beta} V) \big) && \text{(by Eq.~(\ref{eq:tri4}))}\\
&=\ (T' \prec_{\omega} T'') \circ_{\alpha} (U \circ_{\beta} V) && \text{(by Case 6)}\\
&=\ T \circ_{\alpha} (U \circ_{\beta} V).
\end{align*}

\item If there are $T', T'' \in \frat(X,\Omega)$ and $\omega\in \Omega$ such that $T=T' \succ_{\omega} T''$, then
\begin{align*}
&\ (T\circ_{\alpha} U)\circ_{\beta} V=\big((T'\succ_{\omega} T'') \circ_{\alpha} U\big) \circ_{\beta} V\\
&=\ \big( T' \succ_{\omega} (T'' \circ_{\alpha} U) \big) \circ_{\beta} V && \text{(by Case 8)}\\
&=\ T' \succ_{\omega} \big((T'' \circ_{\alpha} U) \circ_{\beta} V \big) && \text{(by Case 8)}\\
&=\ T' \succ_{\omega} \big( T'' \circ_{\alpha} (U \circ_{\beta} V) \big) && \text{(by induction hypothesis)}\\
&=\ (T' \succ_{\omega} T'') \circ_{\alpha} (U \circ_{\beta} V) && \text{(by Case 8)}\\
&=\ T \circ_{\alpha} (U \circ_{\beta} V).
\end{align*}

\item If there are $\stree x, T'' \in \frat(X,\Omega)$ and $\omega \in \Omega$ such that $T=\stree x \circ_{\omega} T''$, then
\begin{align*}
(T \circ_{\alpha} U) \circ_{\beta} V&=\left(\left(\stree x \circ_{\omega} T''\right) \circ_{\alpha} U\right) \circ_{\beta} V\\
&= \left( \stree x \circ_{\omega} (T'' \circ_{\alpha} U) \right) \circ_{\beta} V && \text{( by Case 7)}\\
&= \stree x \circ_{\omega} \big((T'' \circ_{\alpha} U) \circ_{\beta} V\big) && \text{( by Case 7)}\\
&= \stree x \circ_{\omega} \big( T''\circ_{\alpha} (U \circ_{\beta} V)\big) && \text{( by induction hypothesis)}\\
&= \left(\stree x \circ_{\omega} T''\right) \circ_{\alpha} (U \circ_{\beta} V) && \text{(by Case 7)}\\
&= T \circ_{\alpha} (U \circ_{\beta} V).
\end{align*}
\end{enumerate}
Hence Eq.~(\ref{eq:tri7}) holds. So $\bfk \frat(X,\Omega)$ is an $\Omega$-tridendriform algebra.\\

Let $(A,(\prec_{\omega}, \succ_{\omega}, \circ_{\omega})_{\omega \in \Omega})$ be an $\Omega$-tridendriform algebra and $f:X \rightarrow A$ a set map. We extend $f$ to be an $\Omega$-tridendriform algebra $\overline{f}: \bfk \frat(X,\Omega) \rightarrow A$ such that $\overline{f} \circ j=f$. For $T \in \frat(X,\Omega)$ with $\leaf(T)=2$, i.e. $T=\stree x$ for some $x \in X$, define $\overline{f}(T)=f(x)$. Suppose $\overline{f}(T)$ has been defined for all $T$ with $\leaf(T) \leq q$, where $q \geq 2$ is a fixed integer. Consider the case of $\leaf(T)=q+1$. We consider the form of $T$ as follows.
\begin{enumerate}
\item  If $T=\treeoo{\cdb o
\cdx[2]{o}{a2}{120}
\ocdx[2]{o}{a3}{60}{\prescript{\alpha}{}{T_2}}{above}
\node at (90:\xch) {$x$};
}$, then define
\begin{align*}
\overline{f}(T) :=f(x) \prec_{\alpha} \overline{f}(T_2).
\end{align*}

\item If $T= \treeoo{\cdb o\cdx[2]{o}{a1}{160}
\ocdx[2]{o}{a2}{120}{\prescript{\alpha}{}{T_2}}{above}
\ocdx[2]{o}{a3}{60}{T_{m}}{above}
\ocdx[2]{o}{a4}{20}{T_{m+1}}{right}
\node at (140:\xch) {$x_1$};
\node at (90:\xch) {$\cdots$};
\node at (40:\xch) {$x_m$};
}$ with $m \geq 2$, then define
\begin{align*}
\overline{f}(T):&=\ f(x) \circ_{\alpha} \overline{f}\left(\treeoo{\cdb o
\ocdx[2]{o}{a2}{135}{T_2}{above}
\ocdx[2]{o}{a3}{90}{T_{m}}{above}
\ocdx[2]{o}{a4}{45}{T_{m+1}}{right}
\node at (110:\xch) {$\cdots$};
\node at (70:\xch) {$x_m$};
}\right).
\end{align*}

\item If $T=\treeoo{\cdb o\ocdx[2]{o}{a1}{160}{T_1^{\alpha}}{left}
\ocdx[2]{o}{a2}{120}{T_2}{above}
\ocdx[2]{o}{a3}{60}{T_m}{above}
\ocdx[2]{o}{a4}{20}{T_{m+1}}{right}
\node at (140:\xch) {$x_1$};
\node at (90:\xch) {$\cdots$};
\node at (40:\xch) {$x_m$};
}$ with $T_1 \neq |$, then define
\begin{align*}
\overline{f}(T) :=\overline{f}(T_1) \succ_{\alpha} \overline{f}\left(\treeoo{\cdb o\cdx[2]{o}{a1}{160}
\ocdx[2]{o}{a2}{120}{T_2}{above}
\ocdx[2]{o}{a3}{60}{T_{m}}{above}
\ocdx[2]{o}{a4}{20}{T_{m+1}}{right}
\node at (140:\xch) {$x_1$};
\node at (90:\xch) {$\cdots$};
\node at (40:\xch) {$x_m$};
}\right).
\end{align*}
\end{enumerate}
We can get that it is the unique way to extend $f$ as an $\Omega$-tridendriform algebra morphism. Hence,
 $\bfk \frat(X,\Omega)$ and the map $j$ is the free $\Omega$-tridendriform algebra generated by $X$.
\end{proof}

\subsection{Commutative $\Omega$-tridendriform algebras on typed words}

Let us first recall the concept of associative matching algebras \cite{ZGG20}.

\begin{defn} \label{defmatch}
An {\bf associative matching algebra} is a tuple $(A, (\star_{\omega})_{\omega \in \Omega})$, where $A$ is a vector space and for each $\omega \in \Omega$, $\star_{\omega}: A \ot A \rightarrow A$ is a linear map such that
\begin{align*}
(a \star_{\alpha} b) \star_{\beta} c= a \star_{\alpha} (b \star_{\beta} c)
\end{align*}
for all $a,b,c \in A$ and $\alpha,\beta \in \Omega$.\\
\end{defn}

As in \cite{FoissyPeng}, the space of $\Omega$-typed words in $A$ is
\begin{align*}
\shh(A)=\bigoplus_{n \geq 1} \underbrace{A \ot (\bfk \Omega) \ot \cdots \ot (\bfk \Omega) \ot A}_{\text{$n$'s $A$ and $(n-1)$'s
$(\bfk \Omega)$}}.
\end{align*}
For ease of statement, we shall write each pure tensor $\mathbf{v}=v_0 \ot \omega_1 \ot \cdots \ot \omega_n \ot v_n \in \Omega$
under  the form
\begin{align*}
\mathbf{v}=v_0 \ot_{\omega_1} v_1 \ot_{\omega_2} \cdots \ot_{\omega_n} v_n,
\end{align*}
where $n \geq 0$, $\omega_1, \cdots, \omega_n \in \Omega$ and $v_0, \cdots, v_n \in V$ with the convention $\mathbf{v}=v_0$ if $n=0$. We call $\mathbf{v}$ an {\bf $\Omega$-typed word} in $V$ and define its {\bf length} $\ell(\mathbf{v}):=n+1$.

Let $\Omega$ be a set with six products $\leftarrow, \rightarrow, \lhd, \rhd, \cdot, \ast$. For $\omega \in \Omega$, we define products $\prec_{\omega}, \succ_{\omega}, \circ_{\omega}$ on $\shh(A)$ recursively in the following way:

For $\mathbf{a},\mathbf{b} \in \shh(A)$, if $\ell(\mathbf{a})+\ell(\mathbf{b})=2$, then
\begin{align*}
\ell(\mathbf{a})=\ell(\mathbf{b})=1, \,\,  \mathbf{a}=a \,\, \text{ and }\,\, \mathbf{b}=b\,\, \text{ where } a,b \in A.
\end{align*}
Define
\begin{align*}
\mathbf{a} \prec_{\omega} \mathbf{b}:= a \ot_{\omega} b, \,\, \mathbf{a} \circ_{\omega} \mathbf{b}=a \star_{\omega} b \,\, \text{ and }\,\, \mathbf{a} \succ_{\omega} \mathbf{b}= b \ot_{\omega} a.
\end{align*}

For the induction step of $\ell(\mathbf{a})+\ell(\mathbf{b}) >1$, to define $\mathbf{a} \prec_{\omega} \mathbf{b}$, we consider the following two cases.

If $\ell(\mathbf{a})=1$ and $\mathbf{a}=a_1 \in A$, then define
\begin{align*}
\mathbf{a} \prec_{\omega} \mathbf{b} :=a_1 \ot_{\omega} \mathbf{b}.
\end{align*}
Otherwise $\ell(\mathbf{a}) \geq 2$ and write $\mathbf{a}=a_1 \ot_{\alpha_1} \mathbf{a}'$ where $a_1 \in A$, then define
\begin{align*}
\mathbf{a} \prec_{\omega} \mathbf{b} :&=\ a_1 \ot_{\alpha_1 \rightarrow \omega} (\mathbf{a}' \succ_{\alpha_1 \rhd \omega} \mathbf{b})
+a_1 \ot_{\alpha_1 \leftarrow \omega} (\mathbf{a}' \prec_{\alpha_1 \lhd \omega} \mathbf{b}) +a_1 \ot_{\alpha_1 \cdot \omega} (\mathbf{a}' \circ_{\alpha_1 \ast \omega} \mathbf{b}).
\end{align*}

To define $\mathbf{a} \succ_{\omega} \mathbf{b}$, we consider the following two cases.

If $\ell(\mathbf{b})=1$ and $\mathbf{b}=b_1 \in A$, then define
\begin{align*}
\mathbf{a} \succ_{\omega} \mathbf{b} :=b_1 \ot_{\omega} \mathbf{a}.
\end{align*}
Otherwise $\ell(\mathbf{b}) \geq 2$ and write $\mathbf{b}=b_1 \ot_{\beta_1} \mathbf{b}'$ where $b_1 \in A$, then define
\begin{align*}
\mathbf{a} \succ_{\omega} \mathbf{b} := b_1 \ot_{\omega \rightarrow \beta_1} (\mathbf{a} \succ_{\omega \rhd \beta_1} \mathbf{b}')
+b_1 \ot_{\omega \leftarrow \beta_1} (\mathbf{a} \prec_{\omega \lhd \beta_1} \mathbf{b}')
+b_1 \ot_{\omega \cdot \beta_1} (\mathbf{a} \circ_{\omega \ast \beta_1} \mathbf{b}').
\end{align*}

To define $\mathbf{a} \circ_{\omega} \mathbf{b}$, we consider the following three cases.

If $\ell(\mathbf{a})=1$ and $\ell(\mathbf{b}) \geq 2$, write $\mathbf{a}=a_1$ and $\mathbf{b}=b_1 \ot_{\beta_1} \mathbf{b}'$ where $a_1, b_1 \in A$, then define
\begin{align*}
\mathbf{a} \circ_{\omega} \mathbf{b} :=(a_1 \star_{\omega} b_1) \ot_{\beta_1} \mathbf{b}'.
\end{align*}
Otherwise, if $\ell(\mathbf{a}) \geq 2$ and $\ell(\mathbf{b})=1$, write $\mathbf{a}=a_1 \ot_{\alpha_1} \mathbf{a}'$ and $\mathbf{b}=b_1$ where $a_1,b_1 \in A$, then define
\begin{align*}
\mathbf{a} \circ_{\omega} \mathbf{b} :=(a_1 \star_{\omega} b_1) \ot_{\alpha_1} \mathbf{a}'.
\end{align*}
Otherwise, $\mathbf{a} \geq 2$ and $\mathbf{b} \geq 2$, write $\mathbf{a}=a_1 \ot_{\alpha_1} \mathbf{a}'$ and $\mathbf{b}=b_1 \ot_{\beta_1} \mathbf{b}'$ where $a_1, b_1 \in A$, then define
\begin{align*}
\mathbf{a} \circ_{\omega} \mathbf{b} :&=\ (a _1 \star_{\omega} b_1) \ot_{\alpha_1 \rightarrow \beta_1} (\mathbf{a}' \succ_{\alpha_1 \rhd \beta_1} \mathbf{b}')
+(a_1 \star_{\omega} b_1)\ot_{\alpha_1 \leftarrow \beta_1} (\mathbf{a}' \prec_{\alpha_1 \lhd \beta_1} \mathbf{b}')\\
&\ +(a_1 \star_{\omega} b_1)\ot_{\alpha_1 \cdot \beta_1} (\mathbf{a}' \circ_{\alpha_1 \ast \beta_1} \mathbf{b}').
\end{align*}

We obtain the following result:

\begin{theorem} \label{theocomtridend}
With these products defined as above, if $(\Omega, \leftarrow, \rightarrow, \lhd, \rhd, \cdot, \ast)$ is an ETS, then $\shh(A)$ is an $\Omega$-tridendriform algebra. Moreover, if $\Omega$ is commutative and if
 $(A,(\star_\omega)_{\omega \in \Omega})$ is commutative, that is to say: for any $a,b\in A$,
 for any $\omega \in \Omega$, $a\star_\omega b=b\star_\omega a$, then $\shh(A)$ is a commutative $\Omega$-tridendriform algebra.
\end{theorem}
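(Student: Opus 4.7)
The plan is to mimic the strategy used for Theorem~\ref{theoprincipal}: verify the seven axioms~\eqref{eq:tri1}--\eqref{eq:tri7} by induction on the total length $p := \ell(\mathbf{a}) + \ell(\mathbf{b}) + \ell(\mathbf{c})$. The base case $p = 3$, where $\mathbf{a} = a$, $\mathbf{b} = b$, $\mathbf{c} = c$ are all of length one, can be checked by direct unwinding of the recursive definitions; the identities reduce to pure assertions of associativity of $\star_\omega$ on $A$ and of the trivial equalities $a \otimes_\alpha b \otimes_\beta c = a \otimes_\alpha b \otimes_\beta c$, together with the unary ETS identities.

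For the inductive step I would treat each axiom by splitting on which of the three arguments have length one. In each case, writing $\mathbf{a} = a_1 \otimes_{\alpha_1} \mathbf{a}'$, $\mathbf{b} = b_1 \otimes_{\beta_1} \mathbf{b}'$, $\mathbf{c} = c_1 \otimes_{\gamma_1} \mathbf{c}'$ when applicable, expanding both sides using the recursive definitions produces a sum of typed words whose leading letter is $a_1$ (for~\eqref{eq:tri1}), $c_1$ (for~\eqref{eq:tri3}), or $a_1 \star b_1$ with various parentheses (for the mixed axioms~\eqref{eq:tri4}--\eqref{eq:tri7}). On each side the ``type'' just after the leading letter is a product of $\alpha_1, \beta_1, \gamma_1$ (or $\omega$) under the six operations of $\Omega$; matching the nine resulting terms in~\eqref{eq:tri1} or~\eqref{eq:tri3} uses the EDS identities together with the full set of ETS identities~\eqref{EQ17}--\eqref{eq27}, while the remaining ``tail'' parts match by the inductive hypothesis applied to $\mathbf{a}', \mathbf{b}', \mathbf{c}'$ (plus possibly $b_1$ or $c_1$ after an application of associativity of $\star$). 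The short axiom~\eqref{eq:tri2} is immediate from the observation that $\succ$ only prepends a letter to the left of $\mathbf{b}$ and $\prec$ only appends something to the right, so they act on disjoint positions once $\ell(\mathbf{b}) \geq 1$; the cases where $\ell(\mathbf{b}) = 1$ reduce to the definitions on length-one elements. A similar positional argument handles~\eqref{eq:tri6}.

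The main obstacle is organisational rather than conceptual: in~\eqref{eq:tri5} and~\eqref{eq:tri7} the operation $\circ_\beta$ collapses the rightmost letter of $\mathbf{a}$ with the leftmost letter of $\mathbf{b}$ (or $\mathbf{c}$) via $\star$, so one must separate the subcases $\ell(\mathbf{a}) = 1$ versus $\ell(\mathbf{a}) \geq 2$ and, inside the latter, further track whether the subwords obtained after stripping $a_1$ still have length one. The pattern exactly matches Cases 5--8 of the tree proof of Theorem~\ref{theoprincipal}: in each non-base subcase, one applies the recursive definition, pushes $\circ_\beta$ past the leading tensor using the induction hypothesis together with axioms~\eqref{eq:tri4} or~\eqref{eq:tri7}, and repackages. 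The associativity of $\star_\omega$ provided by Definition~\ref{defmatch} is used precisely where two stars meet, giving~\eqref{eq:tri7} at the top level.

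For the commutativity statement, assuming $\Omega$ and $(A,(\star_\omega))$ are commutative, I would prove the two identities
\[
\mathbf{a} \prec_\omega \mathbf{b} = \mathbf{b} \succ_\omega \mathbf{a}, \qquad \mathbf{a} \circ_\omega \mathbf{b} = \mathbf{b} \circ_\omega \mathbf{a}
\]
simultaneously by induction on $\ell(\mathbf{a}) + \ell(\mathbf{b})$. The base case $\ell(\mathbf{a}) = \ell(\mathbf{b}) = 1$ is immediate: $a \prec_\omega b = a \otimes_\omega b = b \succ_\omega a$ and $a \circ_\omega b = a \star_\omega b = b \star_\omega a = b \circ_\omega a$. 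In the inductive step, unwinding the recursive formulas on both sides yields two sums indexed by the same three ETS-products, and the opposite relations in Example~\ref{ex2.4}(2) (which are by hypothesis equalities here) match the types term-by-term, while the subword parts agree by induction. No new obstacle arises.
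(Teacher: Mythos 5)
Your proposal takes essentially the same route as the paper: the seven axioms are verified by induction on $\ell(\mathbf{a})+\ell(\mathbf{b})+\ell(\mathbf{c})$ with the base case of three length-one words checked directly, the types are matched via the ETS relations while the tails are handled by the induction hypothesis, and the commutativity statement is proved by the same induction on $\ell(\mathbf{a})+\ell(\mathbf{b})$; the only organisational difference is that the paper adjoins an empty word $1$ to $\shh(A)$ and a unit $\emptyset$ to $\Omega$ so that the recursions can be written uniformly without splitting on which arguments have length one. One point to correct, however: your claim that \eqref{eq:tri2} is ``immediate'' by a disjoint-positions argument is borrowed from the tree setting, where $T\succ_\alpha U$ and $U\prec_\beta V$ modify the structurally disjoint leftmost and rightmost branches of $U$. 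For typed words $\mathbf{a}\succ_\alpha\mathbf{b}$ does not merely prepend a letter: when $\ell(\mathbf{b})\geq 2$ it recursively produces a three-term sum governed by $\rightarrow,\leftarrow,\cdot$ applied to $\alpha$ and the first type $\beta_1$ of $\mathbf{b}$, so both sides of $(\mathbf{a}\succ_\alpha\mathbf{b})\prec_\beta\mathbf{c}=\mathbf{a}\succ_\alpha(\mathbf{b}\prec_\beta\mathbf{c})$ expand into nine-term sums whose identification again uses the full set of (di/tri)associativity relations on $\alpha,\beta_1,\beta$ together with the induction hypothesis (only the subcase $\ell(\mathbf{b})=1$ is immediate). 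This is the same computation as for \eqref{eq:tri1} and \eqref{eq:tri3}, so your inductive framework absorbs it, but the shortcut as stated does not establish it.
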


\begin{proof}
Denote by $1$ the empty $A$-typed word and $\sh(A):=\bfk 1 \oplus \shh(A)$. Extending the products $\prec_{\omega}, \succ_{\omega}, \circ_{\omega}$ to the space $\sh(A) \ot \shh(A) \oplus \shh(A) \ot \sh(A)$ by
\begin{align*}
1 \succ_\omega \mathbf{a}&:=\mathbf{a} \prec_\omega 1:=\mathbf{a},&
1 \prec_\omega \mathbf{a}&:=\mathbf{a} \succ_\omega 1:=0&& \text{ and }&
 1\circ_{\omega} \mathbf{a}&:=\mathbf{a} \circ_{\omega} 1:=0,
\end{align*}
for all $\omega\in\Omega$ and $\mathbf{a} \in \shh(A)$.
The products $\prec_{\omega}, \succ_{\omega}, \circ_{\omega}$ can now  be rewritten in the following way: for $\mathbf{a}=a_1 \ot_{\alpha_1} \mathbf{a}', \mathbf{b}=b_1 \ot_{\beta_1} \mathbf{b}' \in \shh(A)$.
By convention, we add an element $\emptyset$ to $\Omega$, as a unit for the six products of $\Omega$.
Note that $\mathbf{a}=a_1$ and $\alpha_1=\emptyset$ if $\ell(\mathbf{a})=1$; $\mathbf{b}=b_1$ and $\beta_1=\emptyset$ if $\ell(\mathbf{b})=1$.
\begin{align*}
\mathbf{a} \prec_{\omega} \mathbf{b} :&=\ a_1 \ot_{\alpha_1 \rightarrow \omega} (\mathbf{a}' \succ_{\alpha_1 \rhd \omega} \mathbf{b})
+a_1 \ot_{\alpha_1 \leftarrow \omega} (\mathbf{a}' \prec_{\alpha_1 \lhd \omega} \mathbf{b}) +a_1 \ot_{\alpha_1 \cdot \omega} (\mathbf{a}' \circ_{\alpha_1 \ast \omega} \mathbf{b}),\\
\mathbf{a} \succ_{\omega} \mathbf{b} :&=\ b_1 \ot_{\omega \rightarrow \beta_1} (\mathbf{a} \succ_{\omega \rhd \beta_1} \mathbf{b}')
+b_1 \ot_{\omega \leftarrow \beta_1} (\mathbf{a} \prec_{\omega \lhd \beta_1} \mathbf{b}')
+b_1 \ot_{\omega \cdot \beta_1} (\mathbf{a} \circ_{\omega \ast \beta_1} \mathbf{b}'),\\
\mathbf{a} \circ_{\omega} \mathbf{b} :&=\ (a _1 \star_{\omega} b_1) \ot_{\alpha_1 \rightarrow \beta_1} (\mathbf{a}' \succ_{\alpha_1 \rhd \beta_1} \mathbf{b}')
+(a_1 \star_{\omega} b_1)\ot_{\alpha_1 \leftarrow \beta_1} (\mathbf{a}' \prec_{\alpha_1 \lhd \beta_1} \mathbf{b}')\\
&\ +(a_1 \star_{\omega} b_1)\ot_{\alpha_1 \cdot \beta_1} (\mathbf{a}' \circ_{\alpha_1 \ast \beta_1} \mathbf{b}').
\end{align*}

Now we show that $\shh(A)$ is an $\Omega$-tridendriform algebra. For $\mathbf{a}, \mathbf{b}, \mathbf{c} \in \shh(A)$, we prove Eqs.~(\ref{eq:tri1})-(\ref{eq:tri7}) hold by induction on the sum $\ell(\mathbf{a})+\ell(\mathbf{b})+\ell(\mathbf{c})$.
If $\ell(\mathbf{a})+\ell(\mathbf{b})+\ell(\mathbf{c})=3$, then $\ell(\mathbf{a})=\ell(\mathbf{b})=\ell(\mathbf{c})=1$ and $\mathbf{a}=a_1, \mathbf{b}=b_1, \mathbf{c}=c_1 \in A$, Eqs.~(\ref{eq:tri1})-(\ref{eq:tri7}) hold by direct calculation.

For the induction step of $\ell(\mathbf{a})+\ell(\mathbf{b})+\ell(\mathbf{c}) \geq 4$, assume $\mathbf{a}=a_1 \ot_{\alpha_1} \mathbf{a}'$, then
\begin{align*}
&\ (\mathbf{a} \prec_{\alpha} \mathbf{b}) \prec_{\beta} \mathbf{c}=((a_1 \ot_{\alpha_1} \mathbf{a}') \prec_{\alpha} \mathbf{b}) \prec_{\beta} \mathbf{c}\\
&=\ \big(a_1 \ot_{\alpha_1 \rightarrow \alpha} (\mathbf{a}' \succ_{\alpha_1 \rhd \alpha} \mathbf{b})+ a_1 \ot_{\alpha_1 \leftarrow \alpha} (\mathbf{a}' \prec_{\alpha_1 \lhd \alpha} \mathbf{b})+ a_1 \ot_{\alpha_1 \cdot \alpha}(\mathbf{a}' \circ_{\alpha_1 \ast \alpha} \mathbf{b}) \big) \prec_{\beta} \mathbf{c}\\
&=\ a_1 \ot_{(\alpha_1 \rightarrow \alpha)\rightarrow \beta} \big( (\mathbf{a}' \succ_{\alpha_1 \rhd \alpha} \mathbf{b})\succ_{(\alpha_1 \rightarrow \alpha)\rhd \beta} \mathbf{c} \big)+a_1 \ot_{(\alpha_1 \rightarrow \alpha) \leftarrow \beta} \big((\mathbf{a}' \succ_{\alpha_1 \rhd \alpha} \mathbf{b})\prec_{(\alpha_1 \rightarrow \alpha) \lhd \beta} \mathbf{c} \big)\\
&\ + a_1\ot_{(\alpha_1 \rightarrow \alpha) \cdot \beta} \big((\mathbf{a}' \succ_{\alpha_1 \rhd \alpha} \mathbf{b}) \circ_{(\alpha_1 \rightarrow \alpha) \ast \beta} \mathbf{c} \big)+a_1 \ot_{(\alpha_1 \leftarrow \alpha) \rightarrow \beta} \big((\mathbf{a}' \prec_{\alpha_1 \lhd \alpha} \mathbf{b}) \succ_{(\alpha_1 \leftarrow \alpha) \rhd \beta} \mathbf{c} \big)\\
&\ +a_1 \ot_{(\alpha_1 \leftarrow \alpha) \leftarrow \beta} \big((\mathbf{a}' \prec_{\alpha_1 \lhd \alpha} \mathbf{b}) \prec_{(\alpha_1 \leftarrow \alpha) \lhd \beta} \mathbf{c} \big)+ a_1 \ot_{(\alpha_1 \leftarrow \alpha) \cdot \beta} \big((\mathbf{a}' \prec_{\alpha_1 \lhd \alpha} \mathbf{b}) \circ_{(\alpha_1 \leftarrow \alpha) \ast \beta} \mathbf{c} \big)\\
&\ +a_1 \ot_{(\alpha_1 \cdot \alpha)\rightarrow \beta} \big((\mathbf{a}' \circ_{\alpha_1 \ast \alpha} \mathbf{b})\succ_{(\alpha_1 \cdot \alpha)\rhd \beta} \mathbf{c} \big)+ a_1 \ot_{(\alpha_1 \cdot \alpha) \leftarrow \beta} \big((\mathbf{a}' \circ_{\alpha_1 \ast \alpha} \mathbf{b}) \prec_{(\alpha_1 \cdot \alpha)\lhd \beta} \mathbf{c} \big)\\
&\ +a_1 \ot_{(\alpha_1 \cdot \alpha) \cdot \beta} \big((\mathbf{a}' \circ_{\alpha_1 \ast \alpha} \mathbf{b})\circ_{(\alpha_1 \cdot \alpha) \ast \beta} \mathbf{c} \big)\\
&=\ a_1 \ot_{\alpha_1 \rightarrow (\alpha \rightarrow \beta)} \big(\mathbf{a}' \succ_{\alpha_1 \rhd (\alpha \rightarrow \beta)} (\mathbf{b} \succ_{\alpha \rhd \beta} \mathbf{c}) \big)+a_1 \ot_{\alpha_1 \leftarrow (\alpha \rightarrow \beta)} \big(\mathbf{a}' \prec_{\alpha_1 \lhd (\alpha \rightarrow \beta)} (\mathbf{b} \succ_{\alpha \rhd \beta} \mathbf{c}) \big)\\
&\ +a_1 \ot_{\alpha_1 \cdot (\alpha \rightarrow \beta)} \big(\mathbf{a}' \circ_{\alpha_1 \ast (\alpha \rightarrow \beta)} (\mathbf{b} \succ_{\alpha \rhd \beta} \mathbf{c}) \big)+a_1 \ot_{\alpha_1 \rightarrow (\alpha \leftarrow \beta)} \big(\mathbf{a}' \succ_{\alpha_1 \rhd (\alpha \leftarrow \beta)} (\mathbf{b} \prec_{\alpha \lhd \beta} \mathbf{c}) \big)\\
&\ +a_1 \ot_{\alpha_1 \leftarrow (\alpha \leftarrow \beta)} \big(\mathbf{a}' \prec_{\alpha_1 \lhd (\alpha \leftarrow \beta)}(\mathbf{b} \prec_{\alpha \lhd \beta} \mathbf{c}) \big)+a_1 \ot_{\alpha_1 \cdot (\alpha \leftarrow \beta)} \big(\mathbf{a}' \circ_{\alpha_1 \ast (\alpha \leftarrow \beta)} (\mathbf{b} \prec_{\alpha \lhd \beta} \mathbf{c}) \big)\\
&\ +a_1 \ot_{\alpha_1 \rightarrow (\alpha \cdot \beta)} \big(\mathbf{a}' \succ_{\alpha_1 \rhd (\alpha \cdot \beta)} (\mathbf{b} \circ_{\alpha \ast \beta} \mathbf{c}) \big)+a_1 \ot_{\alpha_1 \leftarrow(\alpha \cdot \beta)} \big(\mathbf{a}' \prec_{\alpha_1 \lhd (\alpha \cdot \beta)} (\mathbf{b} \circ_{\alpha \ast \beta} \mathbf{c}) \big)\\
&\ +a_1 \ot_{\alpha_1 \cdot (\alpha \cdot \beta)} \big(\mathbf{a}' \circ_{\alpha_1 \ast (\alpha \cdot \beta)} (\mathbf{b} \circ_{\alpha \ast \beta} \mathbf{c}) \big)
\hspace{2cm}\text{(by induction hypothesis and $\Omega$ being an ETS)}\\
&=\ \mathbf{a} \prec_{\alpha \rightarrow \beta}(\mathbf{b} \succ_{\alpha \rightarrow \beta} \mathbf{c})+\mathbf{a} \prec_{\alpha \leftarrow \beta}(\mathbf{b} \prec_{\alpha \lhd \beta} \mathbf{c})+ \mathbf{a} \prec_{\alpha \cdot \beta}(\mathbf{b} \circ_{\alpha \ast \beta} \mathbf{c}).
\end{align*}
Hence Eq.~(\ref{eq:tri1}) holds. Similarly, it can be proved that Eqs.~(\ref{eq:tri2})-(\ref{eq:tri7}) hold. So $\shh(A)$ is an $\Omega$-tridendriform algebra.

Next, assume $\Omega$ is commutative and $(A, (\star_{\omega})_{\omega \in \Omega})$ is commutative, we prove that
\begin{align*}
\mathbf{a} \prec_{\alpha} \mathbf{b}&=\mathbf{b} \succ_{\alpha} \mathbf{a} &&\text{ and }
&\mathbf{a} \circ_{\alpha} \mathbf{b}&=\mathbf{b} \circ_{\alpha} \mathbf{a}
\end{align*}
for all $\mathbf{a}, \mathbf{b} \in \shh(A)$ and $\alpha \in \Omega$ by induction on $\ell(\mathbf{a})+\ell(\mathbf{b})$. If $\ell(\mathbf{a})+\ell(\mathbf{b})=2$, then $\ell(\mathbf{a})=\ell(\mathbf{b})=1$ and $\mathbf{a}=a,\mathbf{b}=b \in A$. So
\begin{align*}
\mathbf{a} \prec_{\alpha} \mathbf{b}&=a \ot_{\alpha} b=\mathbf{b} \succ_{\alpha} \mathbf{a}&&\text{ and }& \mathbf{a} \circ_{\alpha} \mathbf{b}&=a \star_{\alpha} b=b \star_{\alpha} a=\mathbf{b} \circ_{\alpha} \mathbf{a}.
\end{align*}
For the inductive step of $\ell(\mathbf{a})+\ell(\mathbf{b}) \geq 3$, assume $\mathbf{a}=a_1 \ot_{\alpha_1} \mathbf{a}'$, then
\begin{align*}
 \mathbf{a} \prec_{\alpha} \mathbf{b}
&=\ a_1 \ot_{\alpha_1 \rightarrow \alpha} (\mathbf{a}' \succ_{\alpha_1 \rhd \alpha} \mathbf{b})+ a_1 \ot_{\alpha_1 \leftarrow \alpha}(\mathbf{a}' \prec_{\alpha_1 \lhd \alpha} \mathbf{b})+a_1 \ot_{\alpha_1 \cdot \alpha} (\mathbf{a}' \circ_{\alpha_1 \ast \alpha} \mathbf{b})\\
&=\ a_1 \ot_{\alpha \leftarrow \alpha_1} (\mathbf{b} \prec_{\alpha \lhd \alpha_1} \mathbf{a}')+ a_1 \ot_{\alpha \rightarrow \alpha_1} (\mathbf{b} \succ_{\alpha \rhd \alpha_1} \mathbf{a}')+ a_1 \ot_{\alpha \cdot \alpha_1} (\mathbf{b} \circ_{\alpha \ast \alpha_1} \mathbf{a}')\\
&\ \hspace{5cm} \text{(by induction hypothesis and $\Omega$ being commutative)}\\
&=\ \mathbf{b} \succ_{\alpha} \mathbf{a}.
\end{align*}
Similarly, it can be proved that $\mathbf{a} \circ_{\alpha} \mathbf{b}=\mathbf{b} \circ_{\alpha} \mathbf{a}$. Hence $\shh(A)$ is a commutative $\Omega$-tridendriform algebra.
\end{proof}

Let $i:A \rightarrow \shh(A)$ be the natural inclusion. We obtain the following result:

\begin{theorem}\label{theouniv}
Let $(\Omega, \leftarrow, \rightarrow, \lhd, \rhd, \cdot, \ast)$ be a commutative ETS and let $(A, (\star_{\omega})_{\omega \in \Omega})$ be a commutative matching associative algebra. If $(B, (\prec_{\omega})_{\omega \in \Omega}, (\succ_{\omega})_{\omega \in \Omega}, (\circ_{\omega})_{\omega \in \Omega})$ is a commutative $\Omega$-tridendriform algebra and $\phi:(A,(\star_{\omega})_{\omega \in \Omega})\rightarrow (B,(\circ_{\omega})_{\omega \in \Omega})$ is a morphism of  matching associative algebras, then there exists a unique morphism $\Phi:\shh(A)\rightarrow B$ of $\Omega$-tridendriform algebras
 such that $\phi=\Phi \circ i$.
 \end{theorem}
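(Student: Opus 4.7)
The plan is to first construct $\Phi$ by recursion on the length of typed words, simultaneously establishing uniqueness, and then verify that the resulting map is a morphism of $\Omega$-tridendriform algebras by a simultaneous induction on the three compatibility identities. The construction rests on the observation that $\shh(A)$ is generated as an $\Omega$-tridendriform algebra by $i(A)$: the first clause of the recursive definition of $\prec$ in $\shh(A)$ reads, for $\mathbf{a}=a_1\ot_{\alpha_1}\mathbf{a}'$ with $\ell(\mathbf{a})\geq 2$, as $\mathbf{a}=i(a_1)\prec_{\alpha_1}\mathbf{a}'$. Any morphism $\Phi$ extending $\phi$ is therefore forced to satisfy
\[\Phi(a_1\ot_{\alpha_1}\mathbf{a}')=\phi(a_1)\prec_{\alpha_1}\Phi(\mathbf{a}'),\]
and reading this as a recursion starting from $\Phi(a)=\phi(a)$ for $a\in A$ both proves uniqueness and defines $\Phi$ on all of $\shh(A)$.

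For existence I would verify the three identities $\Phi(\mathbf{a}\prec_\omega\mathbf{b})=\Phi(\mathbf{a})\prec_\omega\Phi(\mathbf{b})$, $\Phi(\mathbf{a}\succ_\omega\mathbf{b})=\Phi(\mathbf{a})\succ_\omega\Phi(\mathbf{b})$ and $\Phi(\mathbf{a}\circ_\omega\mathbf{b})=\Phi(\mathbf{a})\circ_\omega\Phi(\mathbf{b})$ simultaneously by induction on $\ell(\mathbf{a})+\ell(\mathbf{b})$. For the $\prec$ identity with $\ell(\mathbf{a})\geq 2$, write $\mathbf{a}=a_1\ot_{\alpha_1}\mathbf{a}'$, expand $\Phi(\mathbf{a}\prec_\omega\mathbf{b})$ using the three-term recursion for $\prec$ in $\shh(A)$ together with the defining recursion of $\Phi$, apply the induction hypothesis to the inner subproducts $\mathbf{a}'\succ_{\alpha_1\rhd\omega}\mathbf{b}$, $\mathbf{a}'\prec_{\alpha_1\lhd\omega}\mathbf{b}$ and $\mathbf{a}'\circ_{\alpha_1\ast\omega}\mathbf{b}$, and collapse the result to $(\phi(a_1)\prec_{\alpha_1}\Phi(\mathbf{a}'))\prec_\omega\Phi(\mathbf{b})$ via axiom (\ref{eq:tri1}) applied in $B$; this equals $\Phi(\mathbf{a})\prec_\omega\Phi(\mathbf{b})$ by the defining recursion of $\Phi$. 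The case $\ell(\mathbf{a})=1$ is immediate. The $\succ$ identity then follows from the $\prec$ identity together with the commutativity of $\shh(A)$ (Theorem \ref{theocomtridend}) and of $B$, which give $\mathbf{a}\succ_\omega\mathbf{b}=\mathbf{b}\prec_\omega\mathbf{a}$ and symmetrically in $B$.

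The $\circ$ identity is the main obstacle and requires case analysis on $\ell(\mathbf{a}),\ell(\mathbf{b})$. When $\ell(\mathbf{a})=1$, the recursion reads $\mathbf{a}\circ_\omega\mathbf{b}=(a_1\star_\omega b_1)\ot_{\beta_1}\mathbf{b}'$; applying $\Phi$ and using that $\phi$ is a morphism of matching associative algebras yields $(\phi(a_1)\circ_\omega\phi(b_1))\prec_{\beta_1}\Phi(\mathbf{b}')$, which by axiom (\ref{eq:tri6}) equals $\phi(a_1)\circ_\omega(\phi(b_1)\prec_{\beta_1}\Phi(\mathbf{b}'))=\Phi(\mathbf{a})\circ_\omega\Phi(\mathbf{b})$; the case $\ell(\mathbf{b})=1$ is handled symmetrically via commutativity. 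In the remaining case $\ell(\mathbf{a}),\ell(\mathbf{b})\geq 2$, set $C=\phi(a_1\star_\omega b_1)=\phi(a_1)\circ_\omega\phi(b_1)$, $A'=\Phi(\mathbf{a}')$ and $B'=\Phi(\mathbf{b}')$. The three-term recursion for $\circ$ in $\shh(A)$, combined with the induction hypothesis and the matching morphism property, shows $\Phi(\mathbf{a}\circ_\omega\mathbf{b})=(C\prec_{\alpha_1}A')\prec_{\beta_1}B'$ after a final application of axiom (\ref{eq:tri1}); on the other hand $\Phi(\mathbf{a})\circ_\omega\Phi(\mathbf{b})=(\phi(a_1)\prec_{\alpha_1}A')\circ_\omega(\phi(b_1)\prec_{\beta_1}B')$ can be rewritten as $A'\succ_{\alpha_1}(C\prec_{\beta_1}B')$ by a short chain of applications of commutativity together with axioms (\ref{eq:tri4}) and (\ref{eq:tri6}), and the equality of the two final forms follows from one further use of axiom (\ref{eq:tri1}) combined with the commutativity of $\Omega$ and $B$. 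The real technical difficulty is this bookkeeping, but each step is forced by the shape of the subterm being simplified, and the matching of $\Omega$-decorations on both sides is guaranteed by the ETS identities of Definition \ref{defiSEDS}.
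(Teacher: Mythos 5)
Your proposal is correct and follows essentially the same route as the paper: you define $\Phi$ by the forced recursion $\Phi(a_1\ot_{\alpha_1}\mathbf{a}')=\phi(a_1)\prec_{\alpha_1}\Phi(\mathbf{a}')$ (which immediately gives uniqueness, since $a_1\ot_{\alpha_1}\mathbf{a}'=i(a_1)\prec_{\alpha_1}\mathbf{a}'$) and then verify the three morphism identities by induction on total length — a verification the paper itself leaves implicit with ``we can get that''. The only quibble is a harmless bookkeeping slip at the very end: reconciling $(C\prec_{\alpha_1}A')\prec_{\beta_1}B'$ with $A'\succ_{\alpha_1}(C\prec_{\beta_1}B')$ uses axiom~(\ref{eq:tri2}) and commutativity rather than~(\ref{eq:tri1}), but the asserted equality is correct.
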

In other terms, $\shh$ is the left adjoint functor of the forgetful functor from commutative $\Omega$-tridendriform algebras
 to commutative $\Omega$-associative algebras (which consists to forget $\prec$ and $\succ$).
As a consequence, the free commutative  $\Omega$-tridendriform algebra generated by $A$ is $\shh(A')$
where $A'$ is the free matching commutative algebra generated by $A$.

\begin{proof}
For $\mathbf{a} \in \shh(A)$, we define $\Phi(\mathbf{a})$ by induction on $\ell(\mathbf{a})$. If $\ell(\mathbf{a})=1$, then $\mathbf{a}=a_1 \in A$ and define $\Phi(\mathbf{a})= \phi(a_1)$. For the induction step of $\ell(\mathbf{a}) \geq 2$, suppose $\mathbf{a}=a_1 \ot_{\alpha_1} \mathbf{a}'$, then define
\begin{align*}
\Phi(\mathbf{a})=\Phi(a_1 \ot_{\alpha_1} \mathbf{a}')=\Phi(a_1 \prec_{\alpha_1} \mathbf{a}') :=\Phi(a_1) \prec_{\alpha_1} \Phi(\mathbf{a}').
\end{align*}

We can get that it is the unique way to extend $\phi$ to an $\Omega$-tridendriform algebra morphism $\Phi$ such that $\phi=\Phi \circ i$.\end{proof}

\subsection{From $\Omega$-tridendriform algebras to tridendriform algebras}

If $\Omega$ is an ETS, we consider the three maps
\begin{align*}
\varphi_\leftarrow&:\left\{\begin{array}{rcl}
\bfk\Omega^{\otimes 2}&\longrightarrow&\bfk\Omega^{\otimes 2}\\
\alpha\otimes \beta&\longmapsto&\alpha \leftarrow \beta\otimes \alpha \lhd \beta,
\end{array}\right.&
\varphi_\rightarrow&:\left\{\begin{array}{rcl}
\bfk\Omega^{\otimes 2}&\longrightarrow&\bfk\Omega^{\otimes 2}\\
\alpha\otimes \beta&\longmapsto&\alpha \rightarrow \beta\otimes \alpha \rhd \beta,
\end{array}\right.\\
\varphi_\ast&:\left\{\begin{array}{rcl}
\bfk\Omega^{\otimes 2}&\longrightarrow&\bfk\Omega^{\otimes 2}\\
\alpha\otimes \beta&\longmapsto&\alpha \cdot\beta\otimes \alpha \ast\beta.
\end{array}\right.&
\end{align*}

\begin{prop} \label{prop3.6}
Let $\Omega$ be an ETS and let $A$ be a vector space equipped with bilinear products $\prec_{\omega}, \succ_{\omega}, \circ_{\omega}$. We equip $\bfk \Omega \ot A$ with three bilinear products $\prec, \succ, \circ$ defined by
\begin{align*}
\alpha \ot x \prec \beta \ot y:&=\ \alpha \leftarrow \beta \ot x \prec_{\alpha \lhd \beta} y,\\
\alpha \ot x \succ \beta \ot y:&=\ \alpha \rightarrow \beta \ot x \succ_{\alpha \rhd \beta} y,\\
\alpha \ot x \circ \beta \ot y:&=\ \alpha \cdot \beta \ot x \circ_{\alpha \ast \beta} y,
\end{align*}
for all $\alpha, \beta \in \Omega$ and $x,y \in A$.
\begin{enumerate}
\item \label{it:aa} If $(A, (\prec_{\omega})_{\omega \in \Omega}, (\succ_{\omega})_{\omega \in \Omega}, (\circ_{\omega})_{\omega \in \Omega})$ is an $\Omega$-tridendriform algebra, then $(\bfk \Omega \ot A, \prec, \succ, \circ)$ is a tridendriform algebra.

\item \label{it:bb} If $\varphi_{\leftarrow}$, $\varphi_{\rightarrow}$ and $\varphi_{\ast}$ are surjective, then the converse implication is true.

\end{enumerate}
\end{prop}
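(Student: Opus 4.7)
My plan is to treat parts~\ref{it:aa} and~\ref{it:bb} as two sides of the same computation. Unpacking the definitions of $\prec$, $\succ$, and $\circ$ on $\bfk\Omega\ot A$, every classical tridendriform axiom applied to the triple $(\alpha\ot a,\beta\ot b,\gamma\ot c)$ produces, on each side, a sum of simple tensors whose $\Omega$-prefix is an expression built from $\alpha,\beta,\gamma$ via the six operations on $\Omega$, and whose $A$-component is built from $a,b,c$ via the corresponding operations $\prec_\omega,\succ_\omega,\circ_\omega$. The eighteen ETS identities of Definition~\ref{defiSEDS} (together with the EDS identities) have been chosen precisely so that the $\Omega$-prefixes on the two sides collapse to the same element of $\Omega$, and the residual identity between $A$-components is exactly the $\Omega$-tridendriform axiom applied to some derived index pair $(\gamma',\delta')$.

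For part~\ref{it:aa} I would verify each of the seven classical axioms directly. As a representative case, axiom~\eqref{eq:tri1} expanded on the left gives $((\alpha\leftarrow\beta)\leftarrow\gamma)\ot (a\prec_{\alpha\lhd\beta}b)\prec_{(\alpha\leftarrow\beta)\lhd\gamma}c$, while the three summands on the right have prefixes of the form $\alpha\leftarrow(\beta\star\gamma)$ for $\star\in\{\rightarrow,\leftarrow,\cdot\}$. The EDS identities together with~\eqref{eq20} collapse all four $\Omega$-prefixes to $(\alpha\leftarrow\beta)\leftarrow\gamma$, and the ETS identities~\eqref{EQ1}--\eqref{EQ10}, \eqref{EQ20}, and~\eqref{eq19} rewrite the six derived indices $(\alpha\lhd\beta)\,\square\,((\alpha\leftarrow\beta)\lhd\gamma)$, where $\square$ ranges over the six products on $\Omega$, into exactly the shape required by the $\Omega$-tridendriform axiom~\eqref{eq:tri1} on $A$ applied at $(\gamma',\delta')=(\alpha\lhd\beta,(\alpha\leftarrow\beta)\lhd\gamma)$. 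The remaining six axioms follow the same pattern, each calling on its own cluster of ETS identities.

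For part~\ref{it:bb} the same computation is run in reverse. Starting from a classical tridendriform identity on $\bfk\Omega\ot A$, both sides collapse to a simple tensor with a common $\Omega$-prefix $\omega$; since $\omega$ is a basis element of the free module $\bfk\Omega$, it may be cancelled, leaving an $A$-identity which is the $\Omega$-tridendriform axiom applied at some derived pair $(\phi_i(\alpha,\beta,\gamma),\psi_i(\alpha,\beta,\gamma))$. It then suffices to check that, as $(\alpha,\beta,\gamma)$ ranges over $\Omega^3$, this pair exhausts $\Omega^2$; this is where surjectivity of $\varphi_\leftarrow,\varphi_\rightarrow,\varphi_\ast$ is used. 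For axiom~\eqref{eq:tri1}, for instance, the derived pair is $(\alpha\lhd\beta,(\alpha\leftarrow\beta)\lhd\gamma)$, and a target $(\gamma',\delta')$ is reached by first applying surjectivity of $\varphi_\leftarrow$ to produce $(\alpha_1,\gamma)$ with $\alpha_1\lhd\gamma=\delta'$, and then applying it again to produce $(\alpha,\beta)$ with $\alpha\leftarrow\beta=\alpha_1$ and $\alpha\lhd\beta=\gamma'$. Analogous two-step arguments using $\varphi_\rightarrow$ realize the derived pairs for the $\succ\succ$-type axioms, while $\varphi_\ast$ (combined, where needed, with the surjectivity of the other two) handles the derived pairs for the axioms involving~$\circ$.

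The main obstacle is not conceptual but organizational: one must pair each of the seven classical axioms with the correct cluster of EDS/ETS identities, identify the derived index pair $(\phi_i,\psi_i)$, and then select the appropriate combination of surjectivities to realize every $(\gamma',\delta')\in\Omega^2$. With eighteen ETS identities and seven classical axioms, the case analysis is long but follows the single template illustrated by axiom~\eqref{eq:tri1}.
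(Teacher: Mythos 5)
Your proposal is correct and follows essentially the same route as the paper: part~\ref{it:aa} by expanding each classical axiom and matching the $\Omega$-prefixes and derived indices via the ETS identities, and part~\ref{it:bb} by extracting the $A$-identity at the derived index pair and realizing every pair in $\Omega^2$ through surjectivity. Your two-step surjectivity argument for axiom~\eqref{eq:tri1} is precisely the paper's observation that $(\Id\ot\phi'_\leftarrow)\circ(\phi'_\leftarrow\ot\Id)$ is surjective, so no further comparison is needed.
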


\begin{proof}
\ref{it:aa} For $\alpha, \beta,\gamma \in \Omega$ and $a,b,c \in A$,
\begin{align*}
&\ (\alpha \ot a \prec \beta \ot b) \prec \gamma \ot c=(\alpha \leftarrow \beta \ot a \prec_{\alpha \lhd \beta} b) \prec \gamma \ot c\\
&=\ (\alpha \leftarrow \beta) \leftarrow \gamma \ot (a \prec_{\alpha \lhd \beta} b)\prec_{(\alpha \leftarrow \beta)\lhd \gamma} c\\
&=\ (\alpha \leftarrow \beta) \leftarrow \gamma \ot \left(\begin{array}{c}
a \prec_{(\alpha \lhd \beta) \rightarrow ((\alpha \leftarrow \beta) \lhd \gamma)}(b \succ_{(\alpha \lhd \beta) \rhd ((\alpha \leftarrow \beta) \lhd \gamma)} c)\\
+a \prec_{(\alpha \lhd \beta)\leftarrow ((\alpha \leftarrow \beta) \lhd \gamma)} (b \prec_{(\alpha \lhd \beta) \lhd ((\alpha \leftarrow \beta) \lhd \gamma)} c)\\
\ + a\prec_{(\alpha \lhd \beta) \cdot ((\alpha \leftarrow \beta) \lhd \gamma)} (b \circ_{(\alpha \lhd \beta)\ast ((\alpha \leftarrow \beta) \lhd \gamma)}c)
\end{array}\right) \\
&\quad\quad \text{(by $A$ being an $\Omega$-tridendriform algebra)}\\
&=\ \alpha \leftarrow (\beta \leftarrow \gamma) \ot a \prec_{\alpha \lhd (\beta \leftarrow \gamma)} (b \leftarrow_{\beta \lhd \gamma} c)+ \alpha \leftarrow (\beta \rightarrow \gamma) \ot a \prec_{\alpha \lhd (\beta \rightarrow \gamma)}(b \succ_{\beta \rhd \gamma} c)\\
&\ +\alpha \leftarrow (\beta \cdot \gamma) \ot a \prec_{\alpha \lhd (\beta \cdot \gamma)} (b \circ_{\beta \ast \gamma} c)
\quad\quad \text{(by $\Omega$ being an ETS)}\\
&=\ \alpha \ot a \prec(\beta \leftarrow \gamma \ot b \prec_{\beta \lhd \gamma} c+ \beta \rightarrow \gamma \ot b \succ_{\beta \rhd \gamma} c+ \beta \cdot \gamma \ot b \circ_{\beta \ast \gamma} c)\\
&=\ \alpha \ot a \prec(\beta \ot b \prec \gamma \ot c+ \beta \ot b \succ \gamma \ot c+ \beta \ot b \circ \gamma \ot c).
\end{align*}
The other equations can be proved in the same way. Hence $(\bfk \Omega \ot A, \prec, \succ, \circ)$ is a tridendriform algebra.\\

\ref{it:bb} For $\alpha, \beta, \gamma \in \Omega$ and $a,b,c \in A$, by $\Omega$ being an ETS and
\begin{align*}
(\alpha \ot a \prec \beta \ot b) \prec \gamma \ot c= \alpha \ot a \prec(\beta \ot b \prec \gamma \ot c+ \beta \ot b \succ \gamma \ot c+ \beta \ot b \circ \gamma \ot c),
\end{align*}
we get
\begin{align*}
(a \prec_{\alpha \lhd \beta} b)\prec_{(\alpha \leftarrow \beta)\lhd \gamma} c=a \prec_{\alpha \lhd (\beta \leftarrow \gamma)} (b \leftarrow_{\beta \lhd \gamma} c)+a \prec_{\alpha \lhd (\beta \rightarrow \gamma)}(b \succ_{\beta \rhd \gamma} c)+a \prec_{\alpha \lhd (\beta \cdot \gamma)} (b \circ_{\beta \ast \gamma} c)
\end{align*}

By hypothesis, the following map is surjective:
\begin{align*}
\phi'_\leftarrow&:\left\{\begin{array}{rcl}
\Omega^{2}&\longrightarrow& \Omega^{2}\\
(\alpha, \beta)&\longrightarrow& (\alpha \lhd \beta, \alpha \leftarrow \beta).
\end{array}\right.
\end{align*}
Hence, the following composition is surjective:
\begin{align*}
(\Id \ot \phi'_\leftarrow) \circ (\phi'_{\leftarrow} \ot \Id)&:\left\{\begin{array}{rcl}
\Omega^{3}&\longrightarrow&\Omega^{3}\\
(\alpha, \beta, \gamma) &\longrightarrow& (\alpha \lhd \beta, (\alpha \leftarrow \beta) \lhd \gamma,  (\alpha \leftarrow \beta) \leftarrow \gamma)
\end{array}\right.
\end{align*}
Let $(\alpha, \beta,\gamma), (\alpha',\beta',\gamma') \in \Omega^3$ such that
\begin{align*}
\alpha'&=\alpha \lhd \beta,&\beta'&=(\alpha \leftarrow \beta) \lhd \gamma,&
\gamma'&=(\alpha \leftarrow \beta) \leftarrow \gamma.
\end{align*}
Then
\begin{align*}
\ (a \prec_{\alpha'} b) \prec_{\beta'} c&=\ a \prec_{\alpha \lhd (\beta \leftarrow \gamma)} (b \leftarrow_{\beta \lhd \gamma} c)+a \prec_{\alpha \lhd (\beta \rightarrow \gamma)}(b \succ_{\beta \rhd \gamma} c)+a \prec_{\alpha \lhd (\beta \cdot \gamma)} (b \circ_{\beta \ast \gamma} c)\\
&=\ a\prec_{\alpha' \leftarrow \beta'} (b \prec_{\alpha' \lhd \beta'} c)+ a \prec_{\alpha' \rightarrow \beta'}(b \succ_{\alpha' \rhd \beta'} c)+ a \prec_{\alpha' \cdot \beta'}(b \circ_{\alpha' \ast \beta'} c).
\end{align*}
So Eq.~(\ref{eq:tri1}) holds. Eqs.~(\ref{eq:tri2})-(\ref{eq:tri7}) can be proved similarly.
\end{proof}

\begin{prop}\label{prop3.7}
Let $\Omega$ be an ETS.

$(1)$ The following assertions are equivalent:
\begin{enumerate}
\item \label{it:a11a} The tridendriform algebra $\bfk \Omega \ot \bfk \frat(X,\Omega)$ is generated by the elements $\omega \ot \stree {x}$, where $\omega \in \Omega$ and $x \in X$.

\item \label{it:a11b} The maps $\varphi_{\leftarrow}$, $\varphi_{\rightarrow}$ and $\varphi_{\ast}$ are surjective.

\end{enumerate}

$(2)$ The following assertions are equivalent:
\begin{enumerate}
\item \label{it:b11a} The tridendriform subalgebra of $\bfk \Omega \ot \bfk \frat(X, \Omega)$ generated by the elements $\omega \ot \stree {x}$, where $\omega \in \Omega$ and $x \in X$, is free.
\item \label{it:b11b} The maps  $\varphi_{\leftarrow}$, $\varphi_{\rightarrow}$ and $\varphi_{\ast}$ are injective.
\end{enumerate}

\end{prop}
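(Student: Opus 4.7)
The plan is to analyse the canonical tridendriform morphism $\pi$ from the free tridendriform algebra on the set $\Omega \times X$ into $\bfk \Omega \ot \bfk \frat(X,\Omega)$ determined by $(\omega,x) \mapsto \omega \ot \stree{x}$. Its image is exactly the subalgebra appearing in both $(1)$ and $(2)$, so $(1)$ reduces to ``$\pi$ surjective $\Longleftrightarrow$ $\varphi_\leftarrow,\varphi_\rightarrow,\varphi_\ast$ surjective'' and $(2)$ reduces to ``$\pi$ injective $\Longleftrightarrow$ $\varphi_\leftarrow,\varphi_\rightarrow,\varphi_\ast$ injective.'' The \emph{forward} directions for both parts can be extracted from the $2$-angle elements: the only way to produce a basis element $\lambda \ot T$ with $\leaf(T) = 3$ of, say, $\prec$-shape is through $(\omega_1 \ot \stree{x_1}) \prec (\omega_2 \ot \stree{x_2}) = (\omega_1 \leftarrow \omega_2) \ot (\stree{x_1} \prec_{\omega_1 \lhd \omega_2} \stree{x_2})$, and analogously for $\succ$ and $\circ$. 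Surjectivity (resp.\ injectivity) of each $\varphi$ is therefore necessary for $\pi$ to be surjective (resp.\ injective).

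For the converse of $(1)$, I would induct on $\leaf(T)$. If the rightmost root-subtree $T_{m+1}$ is non-trivial with $\alpha = l(T_{m+1})$, Case~1 in the proof of Theorem~\ref{theoprincipal} writes $T = T' \prec_\alpha T''$ with $T',T''$ of strictly smaller arity; symmetrically, if $T_1 \neq |$ with $\alpha = r(T_1)$, Case~3 gives $T = T' \succ_\alpha T''$; in the remaining situation $T_1 = T_{m+1} = |$ (in which the root has at least three children), Case~5 gives $T = \stree{x_1} \circ_\alpha U$ for some smaller $U$, where $\alpha$ is recovered as the type of the second leaf of $T$. Surjectivity of the relevant $\varphi$ then yields $(\omega_1,\omega_2) \in \Omega^2$ with $\omega_1 \leftarrow \omega_2 = \lambda$, $\omega_1 \lhd \omega_2 = \alpha$ (and the analogues for the other two operations), and $(\omega_1 \ot T') \prec (\omega_2 \ot T'') = \lambda \ot T$, the tensors $\omega_i \ot T^{(i)}$ being in the subalgebra by the inductive hypothesis. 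The converse of $(2)$ uses the same inductive decomposition and observes that each lifting pair $(\omega_1,\omega_2)$ is now \emph{uniquely} determined by $(\lambda,\alpha)$, providing the unique-factorisation structure needed to invert $\pi$ on its image.

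The main obstacle is the converse of $(2)$: the products $\prec_\omega, \succ_\omega, \circ_\omega$ on $\bfk \frat(X,\Omega)$ produce genuine sums of basis trees in Cases~2, 4 and~6--8 of Theorem~\ref{theoprincipal}, so the image $\pi(\tau)$ of a single basis tree $\tau$ is usually a nontrivial linear combination in $\bfk \Omega \ot \bfk \frat(X,\Omega)$ and a naive basis-to-basis argument is insufficient. The cleanest remedy I would pursue is to put a total order on $\frat(X,\Omega)$ compatible with the inductive construction (coming, for example, from the Loday--Ronco recursion), so that each $\pi(\tau)$ has a well-defined leading term whose shape and leaf-type decoration are functorially determined by $\tau$; injectivity of the $\varphi$'s then ensures that distinct source trees produce distinct leading terms, so that no cancellation can occur. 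A secondary subtlety, already visible in Part~$(1)$, is the corolla case of the inductive decomposition, where the convention $\prescript{\omega}{}{|} = |$ means the ``lost'' $\omega$ must be recovered unambiguously from the tree $T$ itself---as the type of the second leaf of $T$---to keep the Case~5 step well-defined.
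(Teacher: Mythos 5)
Your overall strategy---reducing both parts to the surjectivity/injectivity of the canonical tridendriform morphism $\pi$ from the free tridendriform algebra on $\Omega\times X$ to $\bfk\Omega\ot\bfk\frat(X,\Omega)$, reading the forward implications off the $3$-leaf components, and proving the converse of $(1)$ by induction on the number of leaves using the three graft decompositions and the surjectivity of $\varphi_\leftarrow,\varphi_\rightarrow,\varphi_\ast$---is exactly the paper's, up to a harmless reorganisation of the case split in the converse of $(1)$ (you split according to whether the rightmost, resp.\ leftmost, root branch is nontrivial; the paper according to whether the root is a $2$-corolla; both are exhaustive and consume the same surjectivity hypotheses). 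Your remark that in the corolla case the parameter must be read off the type of the second leaf of $T$ is also correct.

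The gap is in the converse of $(2)$. The ``main obstacle'' you identify is not actually there, and because you believe it is, you abandon the direct argument in favour of a leading-term strategy that you do not carry out and whose crucial step (that leading terms are well defined, determined by $\tau$, and never cancel) is precisely what would require proof. What you are missing is this: every basis tree $\tau$ of the free tridendriform algebra on $\Omega\times X$ has a \emph{canonical} expression as an iterated product of generators, namely the recursion used in the freeness part of Theorem \ref{theoprincipal} (peel off a nontrivial leftmost branch with $\succ$; otherwise, if the root has at least three branches, peel off the first angle with $\circ$; otherwise write the $2$-corolla-rooted tree as a generator $\prec$ its right branch). When $\pi$ is evaluated along \emph{this} expression, every product that has to be computed in $\bfk\Omega\ot\bfk\frat(X,\Omega)$ falls into one of the single-tree-valued Cases 1, 3 and 5 of the construction of $\prec_\omega,\succ_\omega,\circ_\omega$ on $\bfk\frat(X,\Omega)$---one always grafts onto a trivial lateral branch---so no sums ever appear: $\pi(\tau)$ is a pure tensor $\lambda\ot T$ with $T$ a single basis tree. (Cases 2, 4 and 6--8, which do produce sums, simply never arise in this evaluation.) Granting this, the ``naive basis-to-basis argument'' you dismiss is exactly what works and is the paper's proof: one shows by induction on the number of leaves that $\pi(\tau)=\pi(\tau')$ forces $\tau=\tau'$, using at each step the injectivity of $\varphi_\leftarrow$, $\varphi_\rightarrow$ or $\varphi_\ast$ to recover the pair of parameters, hence the decoration and the two smaller factors, to which the induction hypothesis applies; distinct basis trees then have linearly independent images and $\pi$ is injective. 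Without this observation your argument for the converse of $(2)$ is not complete.
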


\begin{proof}
Note that the tridendriform algebra $\bfk \Omega \ot \frat(X, \Omega)$ is graded, with for each $n \geq 1$,
\begin{align*}
(\bfk \Omega \ot \frat(X,\Omega))_n=\bfk \Omega \ot \frat_n(X, \Omega).
\end{align*}

(1) $\ref{it:a11a}\Longrightarrow \ref{it:a11b}$ As $\bfk \Omega \ot \frat(X, \Omega)$ is graded, by hypothesis, for any $\alpha \ot \XX{\xxr{-5}5
\node at (0.2,1.2) {\tiny $\beta$};
\xxhu00x \xxhu{-5}5y
} \in (\Omega \ot \frat(X,\Omega))_2$, where $\alpha, \beta \in \Omega$ and $x,y \in X$ , there are $\alpha_1 \ot \stree {y'}, \beta_1 \ot \stree {x'} \in ( \Omega \ot \frat(X,\Omega))_2$ and $p_{\alpha_1, \beta_1} \in \bfk$ such that
\begin{align*}
\ \alpha \ot \XX{\xxr{-5}5
\node at (0.2,1.2) {\tiny $\beta$};
\xxhu00x \xxhu{-5}5y
}
&=\ \sum\limits_{\alpha_1, \beta_1 \in \Omega} p_{\alpha_1, \beta_1} \alpha_1 \ot \stree {y'} \succ \beta_1 \ot \stree {x'}= \sum\limits_{\alpha_1 ,\beta_1 \in \Omega} p_{\alpha_1, \beta_1} \alpha_1 \rightarrow \beta_1 \ot \XX{\xxr{-5}5
\node at (0.2,1.5) {\tiny $\alpha_1 \rhd \beta_1$};
\xxhu00{x'} \xxhu{-5}5{y'}
}.
\end{align*}
Hence, there exists $(\alpha_1, \beta_1) \in \Omega^2$ such that $\alpha_1 \rightarrow \beta_1=\alpha$ and $\alpha_1 \rhd \beta_1 =\beta$. So $\varphi_{\rightarrow}$ is surjective. Similarly, $\varphi_{\leftarrow}$ and $\varphi_{\ast}$ are surjective.\\

$\ref{it:a11b} \Longrightarrow \ref{it:a11a}$ We prove that any $\alpha \ot T \in \bfk \Omega \ot \bfk \frat(X,\Omega)$, where $\alpha \in \Omega$ and $T \in \frat(X, \Omega)$, is generated by $\omega \ot \stree {x}$ by induction on the number $N$ of leaves of $T$. If $N=2$, then $T=\stree {x}$ for some $x \in X$ and it is obvious. Suppose $\alpha \ot T$ is generated by $\omega \ot \stree{x}$ for $N \leq p$, where $p \geq 2$ is a fixed integer. Consider the case of $N=p+1$. We consider the form of $T$ as follows.
\begin{enumerate}
\item If $T=\treeoo{\cdb o
\cdx[2]{o}{a2}{120}
\ocdx[2]{o}{a3}{60}{\prescript{\beta}{}{T_2}}{above}
\node at (90:\xch) {$x$};
}$, let $(\alpha_1, \beta_1 ) \in \Omega^2$ such that $\varphi_{\leftarrow}(\alpha_1, \beta_1)=(\alpha_1 \leftarrow \beta_1, \alpha_1 \lhd \beta_1)=(\alpha, \beta)$. Then
\begin{align*}
\alpha_1 \ot \stree x \prec \beta_1 \ot T_2=(\alpha_1 \leftarrow \beta_1) \ot \treeoo{\cdb o
\cdx[2]{o}{a2}{120}
\ocdx[2]{o}{a3}{60}{\prescript{\alpha_1 \lhd \beta_1}{}{T_2}}{above}
\node at (90:\xch) {$x$};
}=\alpha \ot T.
\end{align*}
Hence, by induction hypothesis, $\alpha \ot T$ is generated by $\omega \ot \stree x$.

\item If $T= \treeoo{\cdb o\cdx[2]{o}{a1}{160}
\ocdx[2]{o}{a2}{120}{\prescript{\beta}{}{T_2}}{above}
\ocdx[2]{o}{a3}{60}{T_{m}}{above}
\ocdx[2]{o}{a4}{20}{T_{m+1}}{right}
\node at (140:\xch) {$x_1$};
\node at (90:\xch) {$\cdots$};
\node at (40:\xch) {$x_m$};
}$ with $m \geq 2$, let $(\alpha_1,\beta_1) \in \Omega^2$ such that $\varphi_{\ast}(\alpha_1, \beta_1)=(\alpha_1 \ast \beta_1, \alpha_1 \cdot \beta_1)=(\beta, \alpha)$. Then
\begin{align*}
\alpha_1 \ot \stree {x} \circ \beta_1 \ot \treeoo{\cdb o
\ocdx[2]{o}{a2}{135}{T_2}{above}
\ocdx[2]{o}{a3}{90}{T_{m}}{above}
\ocdx[2]{o}{a4}{45}{T_{m+1}}{right}
\node at (110:\xch) {$\cdots$};
\node at (70:\xch) {$x_m$};
}=\alpha_1 \cdot \beta_1 \ot  \treeoo{\cdb o\cdx[2]{o}{a1}{160}
\ocdx[2]{o}{a2}{120}{\prescript{\alpha_1 \ast \beta_1}{}{T_2}}{above}
\ocdx[2]{o}{a3}{60}{T_{m}}{above}
\ocdx[2]{o}{a4}{20}{T_{m+1}}{right}
\node at (140:\xch) {$x_1$};
\node at (90:\xch) {$\cdots$};
\node at (40:\xch) {$x_m$};
} =\alpha \ot T.
\end{align*}
Hence, by induction hypothesis, $\alpha \ot T$ is generated by $\omega \ot \stree x$.

\item If $T=\treeoo{\cdb o\ocdx[2]{o}{a1}{160}{T_1^{\beta}}{left}
\ocdx[2]{o}{a2}{120}{T_2}{above}
\ocdx[2]{o}{a3}{60}{T_m}{above}
\ocdx[2]{o}{a4}{20}{T_{m+1}}{right}
\node at (140:\xch) {$x_1$};
\node at (90:\xch) {$\cdots$};
\node at (40:\xch) {$x_m$};
}$ with $T_1 \neq |$, let $(\alpha_1, \beta_1) \in \Omega^2$ such that $\varphi_{\rightarrow}(\alpha_1,\beta_1)=(\alpha_1 \rightarrow \beta_1, \alpha_1 \rhd \beta_1)=(\alpha,\beta)$. Then
\begin{align*}
\alpha_1 \ot T_1 \succ \beta_1 \ot \treeoo{\cdb o\cdx[2]{o}{a1}{160}
\ocdx[2]{o}{a2}{120}{T_2}{above}
\ocdx[2]{o}{a3}{60}{T_{m}}{above}
\ocdx[2]{o}{a4}{20}{T_{m+1}}{right}
\node at (140:\xch) {$x_1$};
\node at (90:\xch) {$\cdots$};
\node at (40:\xch) {$x_m$};
}= (\alpha_1 \rightarrow \beta_1) \ot \treeoo{\cdb o\ocdx[2]{o}{a1}{160}{T_1^{\alpha_1 \rhd \beta_1}}{left}
\ocdx[2]{o}{a2}{120}{T_2}{above}
\ocdx[2]{o}{a3}{60}{T_m}{above}
\ocdx[2]{o}{a4}{20}{T_{m+1}}{right}
\node at (140:\xch) {$x_1$};
\node at (90:\xch) {$\cdots$};
\node at (40:\xch) {$x_m$};
} =\alpha \ot T.
\end{align*}
Hence, by induction hypothesis, $\alpha \ot T$ is generated by $\omega \ot \stree x$.\\
\end{enumerate}

(2) $\ref{it:b11a} \Longrightarrow \ref{it:b11b}$ Denote by $A$ the tridendriform subalgebra of $\bfk \Omega \ot \bfk \frat(X,\Omega)$ generated by elements $\omega \ot \stree x$. Let $(\alpha, \beta),(\alpha',\beta') \in \Omega^2$ such that $\varphi_{\leftarrow}(\alpha, \beta)=\varphi_{\leftarrow}(\alpha',\beta')$. Then
\begin{align*}
\alpha \ot \stree x \prec \beta \ot \stree y=\alpha \leftarrow \beta \ot \XX{\xxl55
\node at (-0.1,1.25) {\tiny $\alpha \lhd \beta$};
\xxhu00x \xxhu55y
}= \alpha' \leftarrow \beta' \ot \XX{\xxl55
\node at (-0.1,1.25) {\tiny $\alpha' \lhd \beta'$};
\xxhu00{x} \xxhu55{y}
}=\alpha' \ot \stree x \prec \beta' \ot \stree y.
\end{align*}
By the freeness of $A$, $(\alpha, \beta)=(\alpha',\beta')$ and so $\varphi_{\leftarrow}$ is injective. The maps $\varphi_{\rightarrow}$
and $\varphi_{\ast}$ can be proved to be injective similarly.\\

$\ref{it:b11b} \Longrightarrow \ref{it:b11a}$ Let $\mathrm{TDend}(\Omega)$ be the free tridendriform algebra generated by $\Omega \ot X$. As a vector space, it is generated by Schr\"oder trees which angles are decorated by $\Omega \ot X$. Let $\Phi: \mathrm{TDend}(\Omega) \rightarrow \bfk \Omega \ot \bfk \frat(X, \Omega)$ be the unique tridendriform algebra sending $\stree {\alpha \ot x}$ to $\alpha \ot \stree x$. We prove that $\Phi$ is injective, i.e.
\begin{align*}
\Phi(T)=\Phi(T') \Longrightarrow T=T'
\end{align*}
by induction on the number $N$ of leaves of $T$. By the construction of $\Phi$, if $\Phi(T)=\Phi(T')$, then $T,T'$ are of the same form. If $N=2$, then $T=\stree {\alpha \ot x}$ for some $\alpha \in \Omega$ and $x \in X$ and obviously $T'=T$.  Suppose $\Phi$ is injective for all $T$ with $N \leq p$, where $p$ is a fixed integer. Consider the case of $N=p+1$. If
\begin{align*}
T=\treeoo{\cdb o
\cdx[2]{o}{a2}{120}
\ocdx[2]{o}{a3}{60}{T_2}{above}
\node at (90:\xch) {$\alpha \ot x$};
} \,\, T'=\treeoo{\cdb o
\cdx[2]{o}{a2}{120}
\ocdx[2]{o}{a3}{60}{T'_2}{above}
\node at (90:\xch) {$\alpha' \ot x'$};
}
\end{align*}
and assume $\Phi(T_2)=\beta \ot U_2, \Phi(T'_2)= \beta' \ot U'_2$. Then
\begin{align*}
\Phi(T)&=\  \Phi\left(\stree {\alpha \ot x} \prec T_2\right)= \Phi\left(\stree {\alpha \ot x}\right) \prec \Phi(T_2)=\alpha \leftarrow \beta \ot  \treeoo{\cdb o
\cdx[2]{o}{a2}{120}
\ocdx[2]{o}{a3}{60}{\prescript{\alpha \lhd \beta}{}{T_2}}{above}
\node at (90:\xch) {$x$};
} \\
\Phi(T')&=\ \Phi\left(\stree {\alpha' \ot x'} \prec T'_2\right)=\Phi\left(\stree {\alpha' \ot x}\right) \prec \Phi(T'_2)=\alpha' \leftarrow \beta' \ot \treeoo{\cdb o
\cdx[2]{o}{a2}{120}
\ocdx[2]{o}{a3}{60}{\prescript{\alpha' \lhd \beta'}{}{T_2}}{above}
\node at (90:\xch) {$x$};
}.
\end{align*}
Since $\Phi(T)=\Phi(T')$ and $\varphi_{\leftarrow}$ is injective, $\alpha=\alpha',x=x'$ and $\Phi(T_2)=\Phi(T'_2)$. Hence by induction hypothesis, $T=T'$. For other forms of $T,T'$, the injectivity of $\Phi$ is proved similarly. Hence $A$ is isomorphic to the free tridendriform algebra $\mathrm{TDend}(\Omega)$ and so A is free.
\end{proof}

\section{Operad of $\Omega$-tridendriform algebras}

Denote by $\mathcal{P}_{\Omega}$ the (nonsymmetric) operad of $\Omega$-tridendriform algebras. It is generated by $\prec_{\alpha}$, $\circ_{\omega}$ and $\succ_{\omega} \in \mathcal{P}_{\Omega}(2)$ with $\alpha \in \Omega$ and the relations:
\begin{align*}
\prec_{\beta} \circ (\prec_{\alpha}, I)&=\ \prec_{\alpha \rightarrow \beta}\circ (I, \succ_{\alpha \rhd \beta})+ \prec_{\alpha \leftarrow \beta} \circ (I, \prec_{\alpha \lhd \beta})+\prec_{\alpha \cdot \beta} \circ (I, \circ_{\alpha \ast \beta}),\\
\prec_{\beta} \circ (\succ_{\alpha}, I)&=\ \succ_{\alpha} \circ (I, \prec_{\beta}),\\
\succ_{\alpha} \circ (I, \succ_{\beta})&=\ \succ_{\alpha \rightarrow \beta} \circ (\succ_{\alpha \rhd \beta}, I)+ \succ_{\alpha \leftarrow \beta} (\prec_{\alpha \lhd \beta}, I)+ \succ_{\alpha \cdot \beta} \circ  (\circ_{\alpha \ast \beta},I),\\
\circ_{\beta} \circ (\succ_{\alpha},I)&=\ \succ_{\alpha} \circ (I, \circ_{\beta}),\\
\circ_{\beta} \circ (\prec_{\alpha}, I)&=\ \circ_{\beta} \circ (I, \succ_{\alpha}),\\
\prec_{\beta} \circ (\circ_{\alpha}, I)&=\ \circ_{\alpha} \circ (I, \prec_{\beta}),\\
\circ_{\beta} \circ (\circ_{\alpha}, I)&=\ \circ_{\alpha} \circ (I, \circ_{\beta}),
\end{align*}
for all $\alpha, \beta \in \Omega$.

As in \cite[Proposition~21]{Foi20}, we obtain the following result:

\begin{prop}
Suppose $m \in \mathcal{P}_{\Omega}(2)$ is of the form
\begin{align*}
m=\sum \limits_{\alpha \in \Omega} a_{\alpha} \prec_{\alpha}+ \sum \limits_{\alpha \in \Omega} b_{\alpha} \circ_{\alpha} + \sum \limits_{\alpha \in \Omega} c_{\alpha} \succ_{\alpha},
\end{align*}
where $a_{\alpha}, b_{\alpha}, c_{\alpha} \in \bfk$. Then $m \circ (I,m)=m \circ (m,I)$ if and only if for any $\alpha, \beta \in \Omega$,
\begin{align*}
a_{\alpha} a_{\beta}&=\ \sum \limits_{\varphi_{\leftarrow} (\alpha',\beta')=(\alpha,\beta)} a_{\alpha'}a_{\beta'}, && a_{\alpha}b_{\beta}=\sum \limits_{\varphi_{\ast}(\alpha',\beta')=(\alpha,\beta)} a_{\alpha'}a_{\beta'},\\
a_{\alpha} c_{\beta}&=\ \sum \limits_{\varphi_{\rightarrow} (\alpha',\beta')=(\alpha,\beta)} a_{\alpha'}a_{\beta'},&& b_{\alpha}c_{\beta}=b_{\alpha}a_{\beta},\\
c_{\alpha}a_{\beta}&=\ \sum \limits_{\varphi_{\leftarrow}(\alpha', \beta')=(\alpha,\beta)}c_{\alpha'} c_{\beta'}, && c_{\alpha}b_{\beta}= \sum \limits_{\varphi_{\ast}(\alpha',\beta')=(\alpha, \beta)} c_{\alpha'}c_{\beta'},\\
c_{\alpha}c_{\beta}&=\ \sum \limits_{\varphi_{\rightarrow}(\alpha',\beta')=(\alpha,\beta)} c_{\alpha'}c_{\beta'}.
\end{align*}
\end{prop}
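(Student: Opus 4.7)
The plan is to reduce the associativity equation $m \circ (I,m) = m \circ (m,I)$ to a coefficient comparison inside the operad $\mathcal{P}_\Omega(3)$. First, applying both sides to three generic inputs $a,b,c$ and using the definition of $m$, one expands $m(a, m(b,c))$ as a sum over $(\alpha,\beta) \in \Omega^2$ of the nine right-combed monomials $a \diamond_\alpha (b \diamond'_\beta c)$ with $\diamond, \diamond' \in \{\prec, \circ, \succ\}$, each weighted by the obvious quadratic product of generating scalars (for example $a \prec_\alpha (b \prec_\beta c)$ with coefficient $a_\alpha a_\beta$, and $a \succ_\alpha(b \succ_\beta c)$ with coefficient $c_\alpha c_\beta$). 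Symmetrically, $m(m(a,b),c)$ is a sum over $(\alpha,\beta)$ of the nine left-combed monomials $(a \diamond_\alpha b) \diamond'_\beta c$ with the analogous coefficients.

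Next I fix a canonical basis of $\mathcal{P}_\Omega(3)$ via rewrite rules coming from the axioms. Reading the seven defining axioms of an $\Omega$-tridendriform algebra from left to right: (\ref{eq:tri1}) rewrites the left-combed $\prec\prec$ monomial into a sum of three right-combed $\prec$-outer monomials; (\ref{eq:tri3}) rewrites the right-combed $\succ\succ$ monomial into a sum of three left-combed $\succ$-outer monomials; and (\ref{eq:tri2}), (\ref{eq:tri4}), (\ref{eq:tri5}), (\ref{eq:tri6}), (\ref{eq:tri7}) are one-to-one rewrites between a single left-combed and a single right-combed form. A natural basis of $\mathcal{P}_\Omega(3)$ is therefore the eight right-combed monomials other than $a \succ_? (b \succ_? c)$ together with the three left-combed $\succ$-outer monomials $(a \diamond_\alpha b) \succ_\beta c$ for $\diamond \in \{\prec, \succ, \circ\}$, for a total of eleven types parametrized by $(\alpha,\beta)$. (Eleven is precisely the small Schr\"oder number counting Schr\"oder tree shapes with four leaves, consistent with the basis description of $\mathcal{P}_\Omega(3)$ provided by Theorem \ref{theoprincipal}.) Applying the rewrite rules to $m(m(a,b),c)$ and to the $a \succ_? (b \succ_? c)$ piece of $m(a, m(b,c))$ places both sides explicitly in this basis.

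Finally, I equate the coefficients of each of the eleven basis monomials. Four of them yield automatic identities: the monomials $a \circ_\alpha(b \prec_\beta c)$, $a \circ_\alpha(b \circ_\beta c)$, $a \succ_\alpha(b \prec_\beta c)$ and $a \succ_\alpha(b \circ_\beta c)$ each contribute only through the one-to-one rewrites (\ref{eq:tri2}), (\ref{eq:tri4}), (\ref{eq:tri6}), (\ref{eq:tri7}), producing relations of the form $xy = yx$ in $\bfk$. The other seven yield the seven stated equations: the three basis monomials $a \prec_\alpha(b \diamond_\beta c)$ pair the coefficients $a_\alpha a_\beta$, $a_\alpha c_\beta$, $a_\alpha b_\beta$ on the left with sums indexed by $\varphi_\leftarrow^{-1}(\alpha,\beta)$, $\varphi_\rightarrow^{-1}(\alpha,\beta)$, $\varphi_\ast^{-1}(\alpha,\beta)$ on the right coming from (\ref{eq:tri1}); the basis monomial $a \circ_\alpha(b \succ_\beta c)$, identified via (\ref{eq:tri5}) with $(a \prec_\beta b) \circ_\alpha c$, gives $b_\alpha c_\beta = b_\alpha a_\beta$; and the three left-combed $\succ$-outer basis monomials yield the remaining three identities via (\ref{eq:tri3}) and the fibres of $\varphi_\leftarrow, \varphi_\ast, \varphi_\rightarrow$. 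Since every step is reversible, this establishes the equivalence. The main obstacle is purely combinatorial bookkeeping: tracking which composition of the six products $\leftarrow, \rightarrow, \lhd, \rhd, \cdot, \ast$ governs each coefficient sum, and swapping the summation variables so that the indexing sets reassemble into the prescribed fibres of $\varphi_\leftarrow, \varphi_\rightarrow, \varphi_\ast$ as they appear in the statement.
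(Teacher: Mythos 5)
Your proposal is correct and follows essentially the same route as the paper: expand both composites, use the seven defining relations of $\mathcal{P}_{\Omega}$ to push everything into a normal form in $\mathcal{P}_{\Omega}(3)$, and equate coefficients, with the fibres of $\varphi_{\leftarrow},\varphi_{\rightarrow},\varphi_{\ast}$ appearing exactly where you say. The paper's own proof is terser --- it only rewrites the $\prec\circ(\prec,I)$ and $\succ\circ(I,\succ)$ terms and leaves the one-to-one identifications and the four automatic cancellations implicit --- whereas you spell out the eleven-type basis, but the argument is the same.
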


\begin{proof}
By the relations of the operad of $\Omega$-tridendriform algebras,
\begin{align*}
&\ m \circ (I, m)=(a_{\alpha}\prec_{\alpha}+b_{\alpha} \circ_{\alpha}+ c_{\alpha} \succ_{\alpha}) \circ (I, a_{\beta} \prec_{\beta}+ b_{\beta} \circ_{\beta}+ c_{\beta} \succ_{\beta})\\
&=\ \sum \limits_{\alpha, \beta}a_{\alpha}a_{\beta} \prec_{\alpha} \circ (I, \prec_{\beta})+ \sum \limits_{\alpha, \beta}a_{\alpha}b_{\beta} \prec_{\alpha} \circ (I, \circ_{\beta})+\sum \limits_{\alpha, \beta} \prec_{\alpha} \circ (I, \succ_{\beta})+ \sum \limits_{\alpha, \beta} b_{\alpha} a_{\beta} \circ_{\alpha} \circ (I, \prec_{\beta})\\
&\ +\sum \limits_{\alpha, \beta} b_{\alpha}b_{\beta} \circ_{\alpha} \circ (I, \circ_{\beta})+\sum \limits_{\alpha, \beta} b_{\alpha}c_{\beta} \circ_{\alpha} \circ (I, \succ_{\beta})+\sum \limits_{\alpha, \beta} c_{\alpha}a_{\beta} \succ_{\alpha} \circ(I, \prec_{\beta})+\sum \limits_{\alpha, \beta} c_{\alpha}b_{\beta} \succ_{\alpha} \circ (I, \circ_{\beta})\\
&\ +\sum \limits_{\alpha, \beta} c_{\alpha} c_{\beta} \succ_{\alpha \rightarrow \beta} \circ (\succ_{\alpha \rhd \beta},I)+\sum \limits_{\alpha, \beta} c_{\alpha} c_{\beta} \succ_{\alpha \leftarrow \beta} (\prec_{\alpha \lhd \beta},I)+\sum \limits_{\alpha, \beta} c_{\alpha} c_{\beta} \succ_{\alpha \cdot \beta} \circ (\circ_{\alpha \ast \beta}, I)
\end{align*}
and
\begin{align*}
&\ m \circ (m,I)=(a_{\alpha} \prec_{\alpha}+b_{\alpha} \circ_{\alpha}+ c_{\alpha} \succ_{\alpha}) \circ (a_{\alpha} \prec_{\alpha}+b_{\alpha} \circ_{\alpha}+ c_{\alpha} \succ_{\alpha},I)\\
&=\ \sum \limits_{\alpha, \beta} a_{\alpha} a_{\beta} \prec_{\alpha \rightarrow \beta}\circ (I,\succ_{\alpha \rhd \beta})+ \sum \limits_{\alpha, \beta} a_{\alpha}a_{\beta} \prec_{\alpha \leftarrow \beta}(I, \prec_{\alpha \lhd \beta})+\sum \limits_{\alpha, \beta}a_{\alpha}a_{\beta} \prec_{\alpha \cdot \beta} \circ (I, \circ_{\alpha \ast \beta})\\
&\ +\sum \limits_{\alpha, \beta}a_{\alpha}b_{\beta} \prec_{\alpha} \circ (\circ_{\beta},I)+\sum \limits_{\alpha, \beta} a_{\alpha} c_{\beta} \prec_{\alpha} \circ (\succ_{\beta},I)+\sum \limits_{\alpha, \beta} b_{\alpha}a_{\beta} \circ_{\alpha} \circ (\prec_{\beta},I)+\sum \limits_{\alpha, \beta} b_{\alpha}b_{\beta} \circ_{\alpha} \circ (\circ_{\beta},I)\\
&\ +\sum \limits_{\alpha, \beta} b_{\alpha}c_{\beta} \circ_{\alpha} \circ (\succ_{\beta},I)+ \sum \limits_{\alpha, \beta} c_{\alpha}a_{\beta} \succ_{\alpha} \circ (\prec_{\beta},I)+\sum \limits_{\alpha, \beta} c_{\alpha}b_{\beta} \succ_{\alpha} \circ (\circ_{\beta},I)+\sum \limits_{\alpha, \beta} c_{\alpha}c_{\beta} \succ_{\alpha} \circ (\succ_{\beta},I).
\end{align*}
Hence $m \circ (I,m)=m \circ (m,I)$ if and only if the above equations hold.
\end{proof}

\begin{remark}
These conditions can be reformulated as follows. We extend $\varphi_\leftarrow$, $\varphi_\rightarrow$ and $\varphi_\ast$
as linear endomorphisms $\bfk\Omega^{\otimes}$. We then consider the three elements of $\bfk\Omega$:
\begin{align*}
a&=\sum_{\alpha \in \Omega} a_\alpha\alpha,&
b&=\sum_{\alpha \in \Omega} b_\alpha\alpha,&
c&=\sum_{\alpha \in \Omega} c_\alpha\alpha.
\end{align*}
Then $m$ is associative if, and only if
\begin{align*}
\varphi_\leftarrow(a\otimes a)&=a\otimes a,&\varphi_\ast(a\otimes a)&=a\otimes b,\\
\varphi_\rightarrow(a\otimes a)&=a\otimes c,&\mbox{$b=0$}&\mbox{ or $a=c$},\\
\varphi_\leftarrow(c\otimes c)&=c\otimes a,&\varphi_\ast(c\otimes c)&=c\otimes b,\\
\varphi_\rightarrow(c\otimes c)&=c\otimes c,
\end{align*}
which is equivalent to
\begin{align*}
&&\left\{\begin{array}{rcl}
b&=&0,\\
\varphi_\leftarrow(a\otimes a)&=&a\otimes a,\\
\varphi_\rightarrow(a\otimes a)&=&a\otimes c,\\
\varphi_\ast(a\otimes a)&=&0,\\
\varphi_\leftarrow(c\otimes c)&=&c\otimes a,\\
\varphi_\rightarrow(c\otimes c)&=&c\otimes c,\\
\varphi_\ast(c\otimes c)&=&0,
\end{array}\right.&&\mbox{or}&&\left\{\begin{array}{rcl}
c&=&a,\\
\varphi_\leftarrow(a\otimes a)&=&a\otimes a,\\
\varphi_\rightarrow(a\otimes a)&=&a\otimes a,\\
\varphi_\ast(a\otimes a)&=&a\otimes b.
\end{array}\right.\\
\end{align*}

\end{remark}

If $\Omega$ is finite, the operad $\mathcal{P}_{\Omega}$ is a finite generated quadratic operad. By computation, we obtain the Koszul dual of $\mathcal{P}_{\Omega}$ as follows.

\begin{prop}\label{propKoszul}
Let $\Omega$ be a finite ETS. The Koszul dual $\mathcal{P}^{!}_{\Omega}$ of $\mathcal{P}_{\Omega}$ is generated by $\dashv_{\alpha}, \perp_{\alpha}, \vdash_{\alpha}$ with $\alpha \in \Omega$ and the relations
\begin{align*}
\dashv_\alpha \circ(I,\dashv_\beta)&=
\sum_{\substack{(\gamma,\delta)\in \Omega^2,\\ \gamma\leftarrow\delta=\alpha,\\
\gamma \lhd \delta=\beta}}\dashv_\delta\circ (\dashv_\gamma,I),&
\dashv_\alpha \circ(I,\vdash_\beta)&=
\sum_{\substack{(\gamma,\delta)\in \Omega^2,\\ \gamma\rightarrow\delta=\alpha,\\
\gamma \rhd \delta=\beta}}\dashv_\delta\circ (\dashv_\gamma,I),\\
\dashv_\alpha \circ(I,\perp_\beta)&=
\sum_{\substack{(\gamma,\delta)\in \Omega^2,\\ \gamma\cdot\delta=\alpha,\\
\gamma \ast \delta=\beta}}\dashv_\delta\circ (\dashv_\gamma,I),\\
\vdash_\beta \circ(\vdash_\alpha,I)&=
\sum_{\substack{(\gamma,\delta)\in \Omega^2,\\ \gamma\rightarrow\delta=\alpha,\\
\gamma \rhd \delta=\beta}}\vdash_\gamma\circ (I,\vdash_\delta),&
\vdash_\beta \circ(\dashv_\alpha,I)&=
\sum_{\substack{(\gamma,\delta)\in \Omega^2,\\ \gamma\leftarrow\delta=\alpha,\\
\gamma \lhd \delta=\beta}}\vdash_\gamma\circ (I,\vdash_\delta),\\
\vdash_\beta \circ(\perp_\alpha,I)&=
\sum_{\substack{(\gamma,\delta)\in \Omega^2,\\ \gamma\cdot\delta=\alpha,\\
\gamma \ast \delta=\beta}}\vdash_\gamma\circ (I,\vdash_\delta),\\
\dashv_{\beta} \circ (\vdash_{\alpha}, I)&=\ \vdash_{\alpha} \circ (I, \dashv_{\beta}),&
\perp_{\beta} \circ (\vdash_{\alpha}, I)&=\ \vdash_{\alpha} \circ (I, \perp_{\beta}),\\
\perp_{\beta} \circ (\dashv_{\alpha},I)&=\ \perp_{\beta} \circ (I, \vdash_{\alpha}),&
\dashv_{\beta} \circ (\perp_{\alpha},I)&=\ \perp_{\alpha} \circ (I, \dashv_{\beta}),\\
\perp_{\beta} \circ (\perp_{\alpha}, I)&=\ \perp_{\alpha} \circ (I, \perp_{\beta}),
\end{align*}
for all $\alpha, \beta \in \Omega$.
\end{prop}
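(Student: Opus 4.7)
My plan is to apply the standard recipe for Koszul duality of a binary quadratic non-symmetric operad. Write $\mathcal{P}_\Omega = \mathcal{F}(V)/(R)$, where the generating space $V = \mathcal{P}_\Omega(2)$ has basis $\{\prec_\omega, \succ_\omega, \circ_\omega\}_{\omega\in\Omega}$, so $\dim V = 3|\Omega|$. In arity $3$, the free non-symmetric operad decomposes as $\mathcal{F}(V)(3) = (V \otimes V) \oplus (V \otimes V)$, the two summands corresponding to left-comb compositions $\mu \circ (\nu, I)$ and right-comb compositions $\mu \circ (I, \nu)$; hence $\dim \mathcal{F}(V)(3) = 18|\Omega|^2$. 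The seven relation families defining an $\Omega$-tridendriform algebra, each indexed by $(\alpha, \beta) \in \Omega^2$, span a subspace $R \subset \mathcal{F}(V)(3)$ of dimension $7|\Omega|^2$: independence follows since each relation contains a distinct pure monomial (for example $\prec_\beta \circ (\prec_\alpha, I)$ in the first family) appearing in no other.

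By definition $\mathcal{P}^!_\Omega = \mathcal{F}(V^*)/(R^\perp)$, with dual generators $\dashv_\omega := \prec_\omega^\vee$, $\vdash_\omega := \succ_\omega^\vee$, $\perp_\omega := \circ_\omega^\vee$, and with the standard sign-twisted pairing on $\mathcal{F}(V^*)(3) \otimes \mathcal{F}(V)(3)$:
\begin{align*}
\langle \mu^* \circ (\nu^*, I),\, \mu \circ (\nu, I)\rangle &= \langle \mu^*, \mu\rangle \langle \nu^*, \nu\rangle, \\
\langle \mu^* \circ (I, \nu^*),\, \mu \circ (I, \nu)\rangle &= -\langle \mu^*, \mu\rangle \langle \nu^*, \nu\rangle,
\end{align*}
with all cross-terms vanishing. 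Writing a general element $X \in \mathcal{F}(V^*)(3)$ as $X = \sum a_{\mu^*, \nu^*}\, \mu^* \circ (\nu^*, I) + \sum b_{\mu^*, \nu^*}\, \mu^* \circ (I, \nu^*)$, each of the seven relation families of $\mathcal{P}_\Omega$ imposes exactly one linear equation among these coefficients for $X$ to lie in $R^\perp$.

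The central step is to verify that each of the $11$ proposed relations of $\mathcal{P}^!_\Omega$, re-expressed as an element of $\mathcal{F}(V^*)(3)$, lies in $R^\perp$. The majority of the $7 \times 11 = 77$ pairwise orthogonality checks are immediate because the nonzero coefficients $a, b$ of $X$ involve only specific ``types'' of dual generators; for instance, the eleventh proposed relation involves only $(\perp, \perp)$-coefficients, and these are decoupled from six of the seven defining relations. The substantive checks are for the six sum relations: for example, the first proposed relation of $\mathcal{P}^!_\Omega$ pairs nontrivially only with the first relation of $\mathcal{P}_\Omega$, and the orthogonality equation then reduces to the statement that the $+1$ contributions coming from the pairs $(\gamma, \delta)$ with $(\gamma\leftarrow\delta, \gamma\lhd\delta) = (\alpha', \beta')$ cancel the single $-1$ contribution from the $\circ_2$-monomial $\dashv_\alpha \circ (I, \dashv_\beta)$, which is immediate from the definition of the indexing set. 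A dimension count then closes the argument: $\dim R^\perp = 18|\Omega|^2 - 7|\Omega|^2 = 11|\Omega|^2$, and the $11$ families of proposed relations are visibly linearly independent from their leading monomials, hence span $R^\perp$.

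The main obstacle is bookkeeping rather than conceptual: signs must be tracked carefully through each pairing, and one must recognize that the sum structure on the dual side is encoding the fibers of the structure maps $\varphi_\leftarrow, \varphi_\rightarrow, \varphi_\ast$ introduced before Proposition \ref{prop3.6}. Explicitly, the indexing condition $(\gamma\leftarrow\delta, \gamma\lhd\delta) = (\alpha, \beta)$ appearing in the first three proposed relations is precisely $\varphi_\leftarrow^{-1}(\alpha, \beta)$, and similarly for $\varphi_\rightarrow$ and $\varphi_\ast$ in the next three. Koszul duality thereby promotes the structure maps of $\Omega$ into summation indices on the dual side, which is exactly what the pairing-based derivation predicts.
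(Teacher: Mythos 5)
The paper offers no proof of this proposition beyond the phrase ``by computation'', so your proposal is supplying precisely the computation the authors allude to, and the framework is the right one: dual generators, the sign-twisted pairing on the two combs of $\mathcal{F}(V)(3)$, a check that each proposed relation annihilates $R$, and the dimension count $18|\Omega|^2-7|\Omega|^2=11|\Omega|^2$ matched by eleven families that are independent by their distinct leading monomials. Your independence argument for the seven defining families and the type-decoupling observation that each dual family pairs nontrivially with exactly one original family are both correct, as is your explicit verification of the first sum relation against Eq.~(\ref{eq:tri1}).

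The gap is the blanket claim that all six sum relations pass the orthogonality test, said to be ``immediate from the definition of the indexing set''. That is true for the three families dual to Eq.~(\ref{eq:tri1}), but not, as printed, for the three dual to Eq.~(\ref{eq:tri3}). In $\succ_\gamma\circ(I,\succ_\delta)=\succ_{\gamma\rightarrow\delta}\circ(\succ_{\gamma\rhd\delta},I)+\succ_{\gamma\leftarrow\delta}\circ(\prec_{\gamma\lhd\delta},I)+\succ_{\gamma\cdot\delta}\circ(\circ_{\gamma\ast\delta},I)$ the \emph{outer} label of each left comb is the $\rightarrow$, $\leftarrow$ or $\cdot$ product and the \emph{inner} label is the $\rhd$, $\lhd$ or $\ast$ product; so the pairing forces, for instance for the relation with left-hand side $\vdash_\beta\circ(\dashv_\alpha,I)$, the conditions $\gamma\leftarrow\delta=\beta$ and $\gamma\lhd\delta=\alpha$, whereas the statement indexes the sum by $\gamma\leftarrow\delta=\alpha$ and $\gamma\lhd\delta=\beta$. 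Carrying out your own pairing computation gives $\langle \vdash_\beta\circ(\dashv_\alpha,I)-\sum\vdash_\gamma\circ(I,\vdash_\delta),\,r\rangle=[\gamma\leftarrow\delta=\alpha][\gamma\lhd\delta=\beta]-[\gamma\leftarrow\delta=\beta][\gamma\lhd\delta=\alpha]$ against the relation $r$ of Eq.~(\ref{eq:tri3}) indexed by $(\gamma,\delta)$, and this is nonzero whenever $\gamma\leftarrow\delta\neq\gamma\lhd\delta$ --- e.g.\ in Example \ref{ex2.4}(a) with $|\Omega|\geq 2$, where $\gamma\leftarrow\delta=\gamma$ and $\gamma\lhd\delta=\delta$. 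So either your verification fails at the fourth, fifth and sixth families, or (more likely) the proposition as printed has $\alpha$ and $\beta$ interchanged in those three summation conditions; an honest execution of your method would detect this, and the place where you wave it through is exactly where the argument needs to slow down.
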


In particular, if $|\Omega|=1$, we recover the definition of triassociative algebras, which operad is the Koszul
dual of the operad of tridendriform algebras \cite{LRtridend}.

\smallskip

\noindent {\bf Acknowledgments}:
The authors acknowledge support from the grant ANR-20-CE40-0007
\emph{Combinatoire Alg\'ebrique, R\'esurgence, Probabilit\'es Libres et Op\'erades}. The second author is supported by the National Natural Science Foundation of China (Grant No. 11771191 and 12101316) and he is also supported by China Scholarship Council to visit ULCO.
\medskip

\bibliographystyle{amsplain}
\bibliography{biblio}

\end{document}

\end{document}